\documentclass[12pt]{amsart}
\usepackage[utf8]{inputenc}
\usepackage[T1]{fontenc}
\usepackage{indentfirst}
\usepackage[active]{srcltx}
\usepackage{amsfonts}
\usepackage{graphicx}
\usepackage{amsmath}
\usepackage{amssymb}
\usepackage{latexsym}
\usepackage{amscd}
\usepackage[all]{xy}
\usepackage{color}
\usepackage{tikz}
\usepackage{breqn}
\usepackage{subfig}
\usetikzlibrary{shapes.geometric}
\usepackage{algorithm}
\usepackage{algorithmic}
\usepackage{hyperref}
\usepackage{cancel}

\newtheorem{theorem}{Theorem}[section]
\newtheorem{lemma}[theorem]{Lemma}
\newtheorem{proposition}[theorem]{Proposition}

\newtheorem{defin}[theorem]{Definition}

\newtheorem{defins}[theorem]{Definitions}

\newtheorem{exs}[theorem]{Examples}

\newtheorem{ex}[theorem]{Example}

\newtheorem{rem}[theorem]{Remark}

\newtheorem{rems}[theorem]{Remarks}

\newtheorem{corollary}[theorem]{Corollary}

\def\e{\epsilon}

\newcommand{\F}{\mathbb{F}}

\def\fq{\mathbb{F}_{q}}

\def\fqt{\mathbb{F}_{q^3}}

\def\e{\epsilon}

\begin{document}
	
\title[On planar functions over $\fqt$]{On planar functions over $\fqt$}
\author[J.P.Guardieiro]{João Paulo Guardieiro}
\address{Centro de Ciências Exatas e Tecnologia - Universidade Federal do Maranhão, Av. dos Portugueses, 1966 - Bacanga, São Luís - MA, Brazil.}
\email{joao.guardieiro@ufma.br}
\address{Instituto de Matemática - Universidade Federal do Rio de Janeiro, Av. Athos da Silveira Ramos, 149 - Cidade Universitária, Rio de Janeiro - RJ, Brazil.}
\author[A. Marques]{Adler Marques}
\email{adler@im.ufrj.br}
\author[L. Quoos]{Luciane Quoos}
\email{luciane@im.ufrj.br}
\author[G. Tizziotti]{Guilherme Tizziotti}
\address{Instituto de Matemática e Estatística - Universidade Federal de Uberlândia, Av. João Naves de Ávila, 2121 - Santa Mônica, Uberlândia - MG, Brazil.}
\email{guilhermect@ufu.br}

\subjclass[2020]{11T06, 11C20, 12E10}
\keywords{Planar functions; finite fields; $q$-polynomials}
\begin{abstract} 
	Let $\mathbb{F}_q$ denote the finite field of order $q$. For $q$ odd, we investigate the planarity over $\mathbb{F}_{q^3}$ of the family
	$$
	f_{E,A,B,C,D}(X) := EX^2+ AX^{q+1}+ BX^{q^2+1}+CX^{2q} +DX^{2q^2}\in \mathbb{F}_{q}[X].
	$$
	Using results from the theory of q-polynomials, we establish conditions under which these polynomials are planar functions. In particular, we provide characterizations for the planarity property and present new families of planar trinomials, quadrinomials, and pentanomials.
\end{abstract}
\maketitle
	\section*{Introduction}
	
	Let $\fq$ be the finite field with $q$ elements, and $\mathbb{F}_{q}^{\ast} $ be the multiplicative group of non-zero elements of $\fq$. Since $\fq$ is finite, every function $f$ from $\mathbb{F}_q$ to itself corresponds to a unique polynomial $f(X)$ in $\mathbb{F}_q[X]$ of degree at most $q-1$. A polynomial $f(X)$ in $\fq[X]$ is called a {\it permutation polynomial} of $\fq$ if the associated polynomial function $f: c \mapsto f(c)$ from $\fq$ to $\fq$ is a bijection. Permutation polynomials over finite fields constitute a classical area of research, principally motivated by their diverse applications in cryptography and coding theory.

	 In \cite{DO}, Dembowski and Ostrom introduced the notion of planar functions in the following way. Let $q$ be odd,  a function $f: \mathbb{F}_{q^n} \rightarrow \mathbb{F}_{q^n}$ is called {\it a planar function} if for each $\epsilon \in \mathbb{F}_{q^n}^{\ast}$, the polynomial defined by $$F_\epsilon(X)=f(X + \epsilon) - f(X)$$ is a permutation polynomial of $\mathbb{F}_{q^n}$. That is, the equations $f(X + \epsilon) - f(X) = \beta$ have exactly one solution for all $\epsilon , \beta \in \mathbb{F}_{q^n}$, with $\epsilon \neq 0$. Note that, if $q$ is even, then $F_{\epsilon}(0) = F_{\epsilon}(\epsilon)$ for every $\epsilon \neq 0$. Therefore, functions satisfying the previous condition exist only for $q$ odd. In the same paper, Dembowski and Ostrom also observed that such functions have close connections with projective planes. Zhou \cite{zhou} defined a natural analogue of planar functions for even characteristic in the following way: if $q$ is even, a polynomial $f \in \mathbb{F}_{q}[X]$ is planar on $\mathbb{F}_{q}$ if $$ f(X + \epsilon) + f(X) + \epsilon X$$ is a permutation polynomial on $\mathbb{F}_{q}$ for each $\epsilon \in \mathbb{F}_{q}^{\ast}$. As shown by Zhou \cite{zhou} and Schmidt and Zhou \cite{SZ}, such planar functions have similar properties and applications as their odd characteristic counterparts. 
	  
	  Planar functions in odd characteristic achieve optimal differential properties, making them useful for constructing cryptographic protocols, particularly for designing S-boxes in block ciphers, see \cite{nyberg} and \cite{cryp}, for error-correcting codes see \cite{DXYY} and \cite{YCD}, and applications in signal sets \cite{DY}. In \cite{zieve}, Zieve studied finite projective planes involving planar functions. In particular, he determined all planar functions on $\mathbb{F}_q$ of the form $a \mapsto a^t$, under the assumption that $q \geq (t-1)^4$. As a consequence, the results presented by Zieve solved two conjectures of Hernando, McGuire and Monteserrat \cite{HMM} and also yield a new proof of a conjecture of Segre and Bartocci \cite{SB} about monomial hyperovals in finite Desarguesian projective planes. The existence and non-existence of planar mappings have been extensively studied in the literature, using various methods, see e.g. \cite{BN2015, BH2011, CSZ, CM1997, KO2012}. Constructing planar polynomials, or characterizing the conditions under which they arise, is a problem in finite field theory that has drawn the attention of many researchers in recent years. Bartoli and Timpanella \cite{bartoli2} investigated planar polynomials of the type
	   $$f_{A,B}(X) = AX^{2^{2m} +1} + B X^{2^m + 1}, \text{ where } A,B \in \mathbb{F}_{2^{3m}}^{\ast}.$$
	    In \cite{bartoli}, Bartoli and Bonini construct planar polynomials over $\mathbb{F}_{q^3}$ of the type 
	    $$f(X) = X(X^{q^2} + AX^q + X), \text{ where } A,B \in \mathbb{F}_q.$$
	     In \cite{CM}, Chen and Mesnager, presented a class of planar functions of the form $$Tr(aX^{q+1}) + \ell{X^2}$$ on $\mathbb{F}_{q^n}$, in particular, they construct several typical kinds of planar functions on $\mathbb{F}_{q^2}$, characterize them on $\mathbb{F}_{q^3}$, and proved that such planar functions do not exist given certain conditions when the degree of the extension of $\mathbb{F}_q$ is higher. Chan and Xiong \cite{CX} characterized planar quadrinomials of the form $$f_{\underline{c}}(X)=c_0X^{qQ+q}+c_1X^{qQ+1}+c_2X^{Q+q}+c_3X^{Q+1}$$ over $\mathbb{F}_{q^2}$, where $Q$ is a power of the characteristic of the field, for any $\underline{c}=(c_0,c_1,c_2,c_3)$ in $ \mathbb{F}_{q^2}^4$ in terms of linear equivalence. For a survey with results on planar functions see e.g. \cite{pott}. 
	  
	  For $q$ odd, in this work  we investigate the planarity over $\mathbb{F}_{q^3}$ of the following family of pentanomials 
	  $$f_{E,A,B,C,D}(X) := EX^2+AX^{q+1}+ BX^{q^2+1}+CX^{2q} +DX^{2q^2} \in \mathbb{F}_{q}[X].$$
	  In particular, we obtain the following new  families of planar functions over $\mathbb{F}_{q^3}$ for $q$ odd:
	  
	  \begin{enumerate}  	
	  	\item  $EX^2 + CX^{2q} +DX^{2q^2}$, with $C, D, E \in \mathbb{F}_q$ satisfying $C^3 + D^3 + E^3 - 3CDE=1/2$. (see Theorem \ref{samefactor}).
	  	\item $-X^2 + 2X^{q^2+1} + X^{2q} - X^{2q^2}$ (see Theorem \ref{teo1}).
		\item $EX^2 + 2(E-D)X^{q+1} + 2DX^{q^2+1} + (E-D)X^{2q} + DX^{2q^2}$,  with $ D, E \in \mathbb{F}_q$ satisfying $E(3D^2 - 3DE + E^2) \neq 0$ (see Theorem \ref{2parametros}).
			\item $X^2 - X^{q+1} - X^{q^2 + 1} + X^{2q} + X^{2q^2}$ (see Theorem \ref{teo: two pent}).
		\item $X^2 + X^{q+1} + X^{q^2 + 1} + X^{2q} + X^{2q^2}$ (see Theorem  \ref{teo: two pent}).	
	  \end{enumerate}
	  We organize this work in the following way. Section 1 is devoted to presenting some notations and results related to the concept of the $q$-polynomial over the finite field $\mathbb{F}_q$. In Section 2, we present the main results of the paper, which establish conditions for the planarity of pentanomials as described above. Finally, Section 3 provides illustrative examples of planar polynomials.

	\section{Preliminaries}
	
	In this section, we  present some results involving the concept of $q$-polynomials, which will be used throughout this paper.
	
	A polynomial of the form $\displaystyle L(X)=\sum_{i=0}^{n-1} c_i  X^{q^i} \in \mathbb{F}_{q^n}[X]$ is called a $q$-polynomial (or a $q$-linearized polynomial) over $\mathbb{F}_{q^n}$. A $q$-polynomial $L(X)$ induces a linear transformation $L: \mathbb{F}_{q^n} \rightarrow \mathbb{F}_{q^n}$, defined by $a \mapsto L(a)$, where $\mathbb{F}_{q^n}$ is considered as an $\mathbb{F}_{q}$-vector space. The following theorem provides a simple, yet powerful, criterion for a 
$q$-polynomial to induce a permutation of $\mathbb{F}_{q^n}$.
	
	\begin{theorem}\cite[Th. 7.9]{finite}
		Let $\mathbb{F}_{q^n}$ be an $\mathbb{F}_q$-vector space. Then the $q$-polynomial
		$$
		\displaystyle L(X) = \sum_{i=0}^{n-1} c_i X^{q^i} \in \mathbb{F}_{q^n}[X] 
		$$
		is a permutation polynomial of $\mathbb{F}_{q^n}$ if and only if $L(X)$ only has the $0$ root in $\mathbb{F}_{q^n}$.
	\end{theorem}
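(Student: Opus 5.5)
The plan is to use only the $\mathbb{F}_q$-linearity of $L$ and the fact that $\mathbb{F}_{q^n}$ is finite. First I will check that $L$ is $\mathbb{F}_q$-linear as a map $\mathbb{F}_{q^n}\to\mathbb{F}_{q^n}$. Each map $a\mapsto a^{q^i}$ is additive, being a power of the Frobenius, since $(a+b)^p=a^p+b^p$ in characteristic $p$. It also satisfies $(\lambda a)^{q^i}=\lambda^{q^i}a^{q^i}=\lambda a^{q^i}$ for $\lambda\in\mathbb{F}_q$, because $\lambda^q=\lambda$. Hence $L(a+b)=L(a)+L(b)$ and $L(\lambda a)=\lambda L(a)$. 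This is the only computation involved, and it is routine.

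Next comes the equivalence. If $L$ is a permutation of $\mathbb{F}_{q^n}$, it is injective. Since $L(0)=0$, no nonzero $a$ can satisfy $L(a)=0$, so $0$ is the only root of $L$ in $\mathbb{F}_{q^n}$.

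For the converse, suppose $0$ is the only root of $L$ in $\mathbb{F}_{q^n}$. If $L(a)=L(b)$, then additivity gives $L(a-b)=0$, so $a=b$. Thus $L$ is injective, i.e. its kernel as an $\mathbb{F}_q$-linear map is trivial. An injective map from the finite set $\mathbb{F}_{q^n}$ to itself is a bijection, so $L$ is a permutation polynomial. Equivalently, rank--nullity on the $n$-dimensional $\mathbb{F}_q$-space $\mathbb{F}_{q^n}$ gives that $L$ is surjective.

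There is no real obstacle here. The one point needing care is to regard $L$ as a function on $\mathbb{F}_{q^n}$ and not as a formal polynomial: "roots" means roots lying in $\mathbb{F}_{q^n}$, and the polynomial may well have other roots in extensions. With that reading, the statement reduces to "trivial kernel $\iff$ bijective" for linear endomorphisms of a finite-dimensional space.
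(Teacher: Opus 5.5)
Your proof is correct. The paper gives no proof of its own; it simply cites Lidl--Niederreiter [Th.~7.9], whose standard argument is the one you give: additivity of $L$ turns $L(a)=L(b)$ into $L(a-b)=0$, so having only the root $0$ means $L$ is injective, and an injective self-map of the finite set $\mathbb{F}_{q^n}$ is a bijection.
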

	
This criterion based on the roots of the $q$-polynomial $
	\displaystyle L(X)
	$ is not always the most convenient tool for explicit computations. An equivalent condition can be given in terms of the Dickson matrix associated with the polynomial. This matrix, known as the $q$-circulant matrix, represents the action of the polynomial on a basis of $\mathbb{F}_{q^n}$ over $\mathbb{F}_q$. The following lemma translates the permutation property of   $
	\displaystyle L(X)
	$ into a nonvanishing determinant condition on the Dickson matrix, a result that will be particularly useful in the next section.

	\begin{lemma}\cite[p. 362]{finite}\label{LemmaLinearized}
		Let $q$ be a prime power, $\mathbb{F}_{q^n}$ an extension of $\mathbb{F}_{q}$, and consider
		$$L(x)=\sum_{i=0}^{n-1} c_i x^{q^i}\in \mathbb{F}_{q^n}[x].$$
		Let $D_L$ be the associated Dickson matrix, defined as
		$$D_L:=
		\left(
		\begin{array}{cccccc}
			c_0&c_1&c_2&\cdots&c_{n-1}\\
			c_{n-1}^q&c_0^q&c_1^q&\cdots&c_{n-2}^q\\
			c_{n-2}^{q^2}&c_{n-1}^{q^2}&c_0^{q^2}&\cdots&c_{n-3}^{q^2}\\
			\vdots&\vdots&\vdots&&\vdots\\
			c_{1}^{q^{n-1}}&c_{2}^{q^{n-1}}&c_3^{q^{n-1}}&\cdots&c_{0}^{q^{n-1}}\\
		\end{array}
		\right).
		$$ Then $L(x)$ is a permutation polynomial of $\mathbb{F}_{q^n}$ if and only if 
		$\det(D_L) \neq 0$.
	\end{lemma}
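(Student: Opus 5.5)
We need to prove that $L(x)=\sum_{i=0}^{n-1} c_i x^{q^i}$ permutes $\mathbb{F}_{q^n}$ if and only if $\det(D_L)\neq 0$. By the preceding theorem, $L$ is a permutation polynomial exactly when its only root in $\mathbb{F}_{q^n}$ is $0$. Since $L$ is $\mathbb{F}_q$-linear, this is equivalent to $\ker L=\{0\}$. So the plan is to relate the kernel of $L$ to the kernel of $D_L$ acting on a suitable vector space.

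First I raise the identity $L(x)=y$ to the powers $q^j$ for $j=0,\dots,n-1$. Using $x^{q^n}=x$ for $x\in\mathbb{F}_{q^n}$ and reindexing, I get
$$L(x)^{q^j}=\sum_{i=0}^{n-1} c_{i-j}^{q^j}\,x^{q^i},$$
with indices taken mod $n$. These are precisely the rows of $D_L$. Hence
$$D_L\,(x,x^q,\dots,x^{q^{n-1}})^T=(L(x),L(x)^q,\dots,L(x)^{q^{n-1}})^T .$$

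Suppose $\det D_L\neq0$ and $L(x)=0$. Then the vector $(x,\dots,x^{q^{n-1}})$ lies in the kernel of $D_L$, so it is zero and $x=0$. Hence $L$ is a permutation.

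For the converse, suppose $L$ is a permutation. Fix a basis $\alpha_1,\dots,\alpha_n$ of $\mathbb{F}_{q^n}$ over $\mathbb{F}_q$ and form the Moore matrix $M=(\alpha_k^{q^{i}})_{i,k}$. Its determinant is nonzero; this is the standard fact, equivalent to the linear independence of the distinct automorphisms $x\mapsto x^{q^i}$ by Artin/Dedekind. Applying the identity above columnwise gives $D_L M=N$, where $N=(L(\alpha_k)^{q^i})_{i,k}$. Since $L$ is a bijective $\mathbb{F}_q$-linear map, the elements $L(\alpha_1),\dots,L(\alpha_n)$ again form a basis, so $N$ is also a Moore matrix of a basis and $\det N\neq0$. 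Therefore $\det D_L=\det N/\det M\neq0$.

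The main point requiring care is the nonsingularity of the Moore matrix of a basis. I would justify it as follows. If some nontrivial combination of the rows vanished, there would be $\lambda_i\in\mathbb{F}_{q^n}$, not all zero, with $\sum_i\lambda_i\alpha_k^{q^i}=0$ for all $k$. By $\mathbb{F}_q$-linearity, the $q$-polynomial $\sum_i\lambda_i x^{q^i}$, which is nonzero of degree at most $q^{n-1}$, would then vanish on all $q^n$ elements of $\mathbb{F}_{q^n}$. This is impossible.
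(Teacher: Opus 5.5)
Your proof is correct and complete. The identity $D_L\,(x,x^q,\dots,x^{q^{n-1}})^T=(L(x),L(x)^q,\dots,L(x)^{q^{n-1}})^T$ holds for $x\in\mathbb{F}_{q^n}$, since entry $(j,i)$ of $D_L$ is $c_{i-j}^{q^j}$. The kernel argument gives ``$\det D_L\neq0\Rightarrow$ permutation''. The factorization $D_LM=N$, together with your degree-count proof that Moore matrices of bases are nonsingular, gives the converse.

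The paper does not prove this lemma itself; it only quotes it from Lidl--Niederreiter. So there is no in-paper argument to compare yours against.

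Your argument can be tightened by computing $N$ rather than only knowing that it is nonsingular. Write $L(\alpha_k)=\sum_l a_{lk}\alpha_l$ with $a_{lk}\in\mathbb{F}_q$. Because the $a_{lk}$ are fixed by Frobenius, $N=MA$, where $A=(a_{lk})$ is the matrix of $L$ as an $\mathbb{F}_q$-linear map. Hence $D_L=MAM^{-1}$ and $\det D_L=\det A$.

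This yields both implications at once, with no separate kernel argument and no need to invoke the Moore criterion a second time for $N$. It also shows that $\det D_L$ always lies in $\mathbb{F}_q$.
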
	
	
	Throughout this work, to simplify the notation, for a polynomial $f(X) \in \mathbb{F}_q[X]$ we define for each $\epsilon \in \mathbb{F}_q$ the polynomial 
	\[
	F_{\epsilon}(X) := f(X+\epsilon) - f(X) - f(\epsilon).
	\] 
	In the case the polynomial $F_{\epsilon}(X)  $ is a $q$-polynomial, the determinant of the Dickson matrix associated with $F_{\epsilon}(X)$ will be denoted by $\Delta_{F_{\epsilon}}$.
	
	\section{Planar functions over \texorpdfstring{$\fqt$}{Fq3}}

		In this section, we investigate the planarity of the following family  over $\mathbb{F}_{q^3}[X]$. Let $q$ be a power of an odd prime, and  
consider
	\begin{equation}\label{eq:generalfamily1}
		f_{E,A,B,C,D}(X) =  EX^2+AX^{q+1}+ BX^{q^2+1}+CX^{2q} +DX^{2q^2} \in \mathbb{F}_{q}[X].
	\end{equation}

A characterization of when such a pentanomial is planar over $\mathbb{F}_{q^3}$ is established in the next result. 
		\begin{proposition}\label{Prop:Condition}
		Let  $A,B,C,D,E\in \mathbb{F}_{q}$, not all zero. The polynomial $f_{E,A,B,C,D}(X)  \in \mathbb{F}_{q^3}[X]$ given in \eqref{eq:generalfamily1} is planar if and only if 
		\begin{align*}
			&(-2A^2D + 2ABE - 2B^2C)(\epsilon^3 + \epsilon^{3q} + \epsilon^{3q^2})+ \\
			&(2A^2C - 2ABD - 4ACD + 4AE^2 + 2B^2E - 4BCE + 4BD^2)(\epsilon^{2+q} + \epsilon^{1+2q^2} + \epsilon^{2q+q^2})+ \\
			&(2A^2E - 2ABC + 4AC^2 - 4ADE + 2B^2D - 4BCD + 4BE^2)(\epsilon^{2+q^2} + \epsilon^{1+2q} + \epsilon^{q+2q^2})+ \\
			&(2A^3 + 2B^3 + 8C^3 - 24CDE + 8D^3 + 8E^3)\epsilon^{1+q+q^2} \neq 0
		\end{align*} 
		for any  $\epsilon\in \mathbb{F}_{q^3}^*$.
	\end{proposition}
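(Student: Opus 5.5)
The plan is to reduce planarity to a permutation property of a $q$-polynomial and then apply Lemma \ref{LemmaLinearized}. Since $f=f_{E,A,B,C,D}$ has $\mathbb{F}_q$-coefficients and every exponent is of the form $q^i+q^j$, the map $(x,y)\mapsto f(x+y)-f(x)-f(y)$ is $\mathbb{F}_q$-bilinear. Hence, for fixed $\epsilon\in\mathbb{F}_{q^3}^*$, the polynomial $F_\epsilon(X)=f(X+\epsilon)-f(X)-f(\epsilon)$ is a $q$-polynomial in $X$. Moreover $f(X+\epsilon)-f(X)=F_\epsilon(X)+f(\epsilon)$ differs from $F_\epsilon$ only by a constant. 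So $f$ is planar if and only if $F_\epsilon$ permutes $\mathbb{F}_{q^3}$ for every $\epsilon\neq 0$, and by Lemma \ref{LemmaLinearized} this holds if and only if $\Delta_{F_\epsilon}\neq 0$ for every $\epsilon\neq0$. The hypothesis that not all coefficients vanish only excludes the trivial case $f=0$.

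Next I compute $F_\epsilon$ term by term, reducing exponents modulo $X^{q^3}-X$ where necessary:
\[
EX^2\mapsto 2E\epsilon X,\quad AX^{q+1}\mapsto A(\epsilon^qX+\epsilon X^q),\quad BX^{q^2+1}\mapsto B(\epsilon^{q^2}X+\epsilon X^{q^2}),
\]
\[
CX^{2q}\mapsto 2C\epsilon^qX^q,\qquad DX^{2q^2}\mapsto 2D\epsilon^{q^2}X^{q^2}.
\]
This gives $F_\epsilon(X)=c_0X+c_1X^q+c_2X^{q^2}$ with
\[
c_0=2E\epsilon+A\epsilon^q+B\epsilon^{q^2},\qquad c_1=A\epsilon+2C\epsilon^q,\qquad c_2=B\epsilon+2D\epsilon^{q^2}.
\]

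The Dickson matrix is $3\times 3$, with rows $(c_0,c_1,c_2)$, $(c_2^q,c_0^q,c_1^q)$ and $(c_1^{q^2},c_2^{q^2},c_0^{q^2})$, where $\epsilon^{q^3}=\epsilon$. I expand it by Sarrus' rule:
\[
\Delta_{F_\epsilon}=N(c_0)+N(c_1)+N(c_2)-\mathrm{Tr}\bigl(c_0c_1^{q^2}c_2^{q}\bigr)\cdot(\text{appropriately arranged}),
\]
where $N(c)=c\,c^q c^{q^2}$ and $\mathrm{Tr}$ denotes the sum over the three Frobenius conjugates. Concretely, the three diagonal products give $N(c_0)+N(c_1)+N(c_2)$, and the three anti-diagonal products are the Frobenius conjugates of $c_0^{q^2}c_1^q c_2$ (or a similar monomial).

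Each $c_i$ is linear in $\epsilon,\epsilon^q,\epsilon^{q^2}$, so $\Delta_{F_\epsilon}$ is a cubic form in these three quantities. Because $\Delta_{F_\epsilon}$ is Frobenius-invariant, its monomials group into orbits under the cyclic shift $\epsilon\mapsto\epsilon^q$:
\begin{itemize}
\item the orbit $\{\epsilon^3,\epsilon^{3q},\epsilon^{3q^2}\}$;
\item the two orbits of type $\epsilon^{2q^i+q^j}$, namely $\{\epsilon^{2+q},\epsilon^{2q+q^2},\epsilon^{2q^2+1}\}$ and $\{\epsilon^{2+q^2},\epsilon^{2q+1},\epsilon^{2q^2+q}\}$;
\item the single fixed monomial $\epsilon^{1+q+q^2}$.
\end{itemize}
It therefore suffices to compute one coefficient from each orbit.

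As a check on the bookkeeping, the coefficient of $\epsilon^{1+q+q^2}$ should collect $8E^3+2A^3+2B^3+8C^3+8D^3$ from the norms and $-24CDE$ from the cross terms, which agrees with the stated formula. The main obstacle is purely computational: carrying out the expansion without sign or multiplicity errors, especially for the mixed orbits, where every term such as $4AE^2$ or $-4BCE$ must be traced back to a specific product. I would organize this by writing each row entry as a vector of coefficients in $(\epsilon,\epsilon^q,\epsilon^{q^2})$, multiplying these out, and verifying the result symbolically.
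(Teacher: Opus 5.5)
Your route is the same as the paper's. You derive the same coefficients $c_0=2E\epsilon+A\epsilon^q+B\epsilon^{q^2}$, $c_1=A\epsilon+2C\epsilon^q$, $c_2=B\epsilon+2D\epsilon^{q^2}$, apply Lemma~\ref{LemmaLinearized}, and expand the $3\times 3$ Dickson determinant. Your remark that $f(X+\epsilon)-f(X)$ differs from $F_\epsilon$ only by the constant $f(\epsilon)$ is a step the paper leaves implicit.

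Since that expansion is the whole content of the proof, the one structural formula you commit to has to be right, and it is not. In Sarrus' rule the three negative products are $M_{11}M_{23}M_{32}=c_0c_1^{q}c_2^{q^2}$, $M_{13}M_{22}M_{31}=c_0^{q}c_1^{q^2}c_2$ and $M_{12}M_{21}M_{33}=c_0^{q^2}c_1c_2^{q}$. Hence
\[
\Delta_{F_\epsilon}=N(c_0)+N(c_1)+N(c_2)-\mathrm{Tr}\bigl(c_0c_1^{q}c_2^{q^2}\bigr).
\]
The monomials you name, $c_0c_1^{q^2}c_2^{q}$ and $c_0^{q^2}c_1^{q}c_2$, lie in the other Frobenius orbit of products $c_0^{q^i}c_1^{q^j}c_2^{q^k}$. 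Expanding with them gives a different polynomial. For example, the coefficient of $\epsilon^{1+q+q^2}$ would come out as $2A^3+2B^3+8C^3+8D^3+8E^3-6A^2D-6B^2C$, with no $CDE$ term at all.

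Your sanity check on that coefficient agrees with the statement only because two slips cancel. Write $x=\epsilon$, $y=\epsilon^q$, $z=\epsilon^{q^2}$.
\begin{itemize}
\item The norm $N(c_0)=(2Ex+Ay+Bz)(Bx+2Ey+Az)(Ax+By+2Ez)$ contributes $8E^3+A^3+B^3+6ABE$ to $xyz$ (it is the permanent of a circulant), not just $8E^3+A^3+B^3$.
\item The correct cross terms contribute $-3\cdot 2E(AB+4CD)=-6ABE-24CDE$, not just $-24CDE$.
\end{itemize}

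With the cross term corrected, your orbit-by-orbit bookkeeping does reproduce the proposition. On the orbit of $\epsilon^3$:
\begin{itemize}
\item $N(c_0)$ contributes $2ABE$ to each of $x^3,y^3,z^3$;
\item $N(c_1)$ and $N(c_2)$ contribute nothing;
\item $c_0c_1^qc_2^{q^2}$ contributes $2A^2D\,y^3+2B^2C\,z^3$.
\end{itemize}
This gives $-2A^2D+2ABE-2B^2C$. The two mixed orbits work out the same way and match the stated coefficients.
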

	\begin{proof}
		By the definition of planar function, we have that the polynomial $f_{E,A,B,C,D}$ is planar if and only if, for all
		$ \epsilon \in \mathbb{F}_{q^3}^*$, the associated polynomial $$F_{\epsilon}(X):=f_{E,A,B,C,D}(X+\epsilon)-f_{E,A,B,C,D}(X)-f_{E,A,B,C,D}(\epsilon)$$ is a permutation polynomial over $\mathbb{F}_{q^3}$.  We have 
		\begin{align*}
			F_{\epsilon}(X)&= E(X+\e)^2+A(X+\e)^{q+1}+ B(X+\e)^{q^2+1}+C(X+\e)^{2q} +D(X+\e)^{2q^2}\\
			&-(EX^2+ AX^{q+1}+ BX^{q^2+1}+CX^{2q} +DX^{2q^2})\\
			&-(E\e^2 + A\e^{q+1}+ B\e^{q^2+1}+C\e^{2q} +D\e^{2q^2})\\
			&=(2E\e + A \epsilon^q + B \epsilon^{q^2})X + (A \epsilon + 2C \epsilon^q)X^q + (B \epsilon + 2D \epsilon^{q^2})X^{q^2}.
		\end{align*}

		Since $F_{\epsilon}(X)$  is a $q$-polynomial, by Lemma \ref{LemmaLinearized}, $F_{\epsilon}(X)$ is a permutation polynomial of $\fqt$ for any $\epsilon \in \mathbb{F}_{q^3}^*$  if and only if its associated Dickson matrix is  nonsingular for any $\epsilon \in \mathbb{F}_{q^3}^*$. Now, the determinant $\Delta_{F_{\epsilon}}$ of the Dickson matrix of $F_{\epsilon}(X)$ is given by 
		\[
		\Delta_{F_{\epsilon}} = 
		\det\begin{pmatrix}
			2E\e + A \epsilon^{q} + B \epsilon^{q^2} &  A \epsilon + 2 C \epsilon^{q}      	&B \epsilon + 2 D \epsilon^{q^2} \\
			B \epsilon^{q} + 2 D \epsilon 		   & 2E\e^q + A \epsilon^{q^2} + B \epsilon  &    A \epsilon^{q} + 2 C \epsilon^{q^2} \\
			A \epsilon^{q^2} + 2 C \epsilon  &   B \epsilon^{q^2} + 2 D \epsilon^{q} 	& 2E\e^{q^2} + A \epsilon + B \epsilon^{q}\\
		\end{pmatrix}, 
		\]
		which simplifies exactly to the expression in the statement. So, the result follows from Lemma \ref{LemmaLinearized}. 
	\end{proof}
	
	This characterization reduces the planarity condition to  a polynomial not having zeros in  $\mathbb{F}_{q^3}^\ast$. Although explicit, this condition is somewhat complex and not easily verified directly. In the following, we present some results to enable explicit constructions of planar pentanomials.

\begin{lemma} \label{Lemma_L}
	Let $q$ be an odd prime power. For $a, b, c \in \mathbb{F}_q$, the polynomial 
$$L(X) = aX^{q^2} + bX^q+cX$$ has no roots in $\mathbb{F}_{q^3}^*$ if and only if $a^3+b^3+c^3-3abc \neq 0$.
\end{lemma}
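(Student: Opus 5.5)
The plan is to apply the first theorem of the preliminaries (Th. 7.9 of \cite{finite}) together with Lemma \ref{LemmaLinearized}. $L$ is a $q$-polynomial, so it has no roots in $\mathbb{F}_{q^3}^*$ exactly when it permutes $\mathbb{F}_{q^3}$. That in turn holds exactly when its Dickson matrix is nonsingular. Since $a,b,c\in\mathbb{F}_q$ are fixed by Frobenius, in the notation of Lemma \ref{LemmaLinearized} we have $c_0=c$, $c_1=b$, $c_2=a$. The Dickson matrix is therefore the ordinary circulant matrix
\[
D_L=\begin{pmatrix} c & b & a\\ a & c & b\\ b & a & c\end{pmatrix}.
\]

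The key step is computing $\det(D_L)$. Expanding along the first row gives
\[
c(c^2-ab)-b(ac-b^2)+a(a^2-bc).
\]
This simplifies to $a^3+b^3+c^3-3abc$. The equivalence then follows directly: $L$ has no nonzero roots in $\mathbb{F}_{q^3}$ if and only if $\det(D_L)\neq 0$, that is, if and only if $a^3+b^3+c^3-3abc\neq 0$.

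There is no real obstacle here. The only point needing care is to check that the Frobenius twists in the Dickson matrix disappear because the coefficients lie in $\mathbb{F}_q$. One should also make sure the rows are ordered as in Lemma \ref{LemmaLinearized}; this affects at most a sign, which does not change whether the determinant vanishes. As an optional cross-check, one could use the factorization $a^3+b^3+c^3-3abc=(a+b+c)(a+\omega b+\omega^2 c)(a+\omega^2 b+\omega c)$ over a field containing a primitive cube root of unity $\omega$. This matches the eigenvalues of the circulant matrix, though it is not needed for the proof. The hypothesis that $q$ is odd is not used in this argument; it is kept only for consistency with the rest of the paper.
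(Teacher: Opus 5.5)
Your proposal is correct and follows essentially the same route as the paper: Theorem 7.9 and Lemma \ref{LemmaLinearized} reduce the question to the nonsingularity of the Dickson matrix, the Frobenius twists disappear because $a,b,c\in\mathbb{F}_q$, and the resulting circulant has determinant $a^3+b^3+c^3-3abc$. Your matrix, with first row $(c,b,a)$, is the one literally prescribed by Lemma \ref{LemmaLinearized}; the paper writes the circulant with first row $(a,b,c)$ instead, but the two have the same determinant, so nothing changes.
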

\begin{proof}
Since  $L(X)$ is a $q$-polynomial over $\mathbb{F}_{q^3}$, by Lemma \ref{LemmaLinearized}, it is a permutation polynomial if and only if the determinant of its respective associated Dickson matrix is non-zero.
 The associated Dickson matrix of $L(X)$ is
$$
A_{L} = \begin{pmatrix}
	a& b & c\\
	c & a & b\\
	b & c & a
\end{pmatrix},
$$
which has determinant $\det(A_{L}) = a^3+b^3+c^3-3abc$.
Since $L(0)=0$ and $L$ permutes $\mathbb{F}_{q^3}$, we conclude that it has no root in $\mathbb{F}_{q^3}^*$.
\end{proof}

In the next  results 
we examine two families of $3 \times 3$ matrices, where the entries are functions of an element $\epsilon \in \mathbb{F}_{q^3}^*$. For each of these families of matrices, we are going to show that the determinant is never zero, regardless of the value chosen for $\epsilon \in \mathbb{F}_{q^3}^*$. These technical results will play a key role in verifying the nonvanishing condition that appears in the planarity characterization given in Proposition \ref{Prop:Condition}.

\begin{proposition} \label{lemma Ag}
	Let $q$ be  odd. For $\e \in \mathbb{F}_{q^3}^{\ast}$ consider the matrix
	$$
	A_{\e}=\left( \begin{array}{ccc}
		2 \epsilon + \epsilon^q + \epsilon^{q^2} & ( \epsilon + \epsilon^{q^2})^q & (\epsilon + \epsilon^q)^{q^2}\\
		\epsilon + \epsilon^q & (2 \epsilon + \epsilon^q + \epsilon^{q^2})^q & ( \epsilon + \epsilon^{q^2} )^{q^2}\\
		\epsilon + \epsilon^{q^2} & (\epsilon + \epsilon^q)^q & (2 \epsilon + \epsilon^q + \epsilon^{q^2})^{q^2}
	\end{array}\right).
	$$
	Then, $\det(A_{\e}) \neq 0$ for all $\e \in \mathbb{F}_{q^3}^{\ast}$. 
\end{proposition}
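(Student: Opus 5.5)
The plan is to recognize $A_\e$ as a Dickson-type matrix coming from Proposition \ref{Prop:Condition}. Then we reuse that proposition's closed-form determinant and factor it as a norm.

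\textbf{Step 1: identify the matrix.} Take $E=1$, $A=B=1$, $C=D=\tfrac12$; this is legitimate since $q$ is odd. Then the three coefficients of $F_\e$ computed in the proof of Proposition \ref{Prop:Condition} are
\[
2\e+\e^q+\e^{q^2},\qquad \e+\e^q,\qquad \e+\e^{q^2}.
\]
These are exactly the entries of the first column of $A_\e$. The remaining columns are the successive $q$-th power twists, so $A_\e$ is the transpose of the Dickson matrix of $F_\e$ for
\[
f=X^2+X^{q+1}+X^{q^2+1}+\tfrac12X^{2q}+\tfrac12X^{2q^2}.
\]
I will check this entry by entry against the Dickson matrix displayed in the proof of Proposition \ref{Prop:Condition}. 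Since transposition does not change the determinant, $\det(A_\e)$ equals the polynomial expression of Proposition \ref{Prop:Condition} at these parameter values.

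\textbf{Step 2: evaluate the four coefficients.} A routine computation gives:
\begin{itemize}
\item the coefficient of $\e^3+\e^{3q}+\e^{3q^2}$ is $-1+2-1=0$;
\item each of the two mixed six-term blocks has coefficient $4$;
\item the coefficient of $\e^{1+q+q^2}$ is $2+2+1-6+1+8=8$.
\end{itemize}
Let $T=\e+\e^q+\e^{q^2}$, $S=\e^{1+q}+\e^{q+q^2}+\e^{q^2+1}$ and $N=\e^{1+q+q^2}$. The six mixed monomials sum to $TS-3N$. Hence
\[
\det(A_\e)=4(TS-3N)+8N=4(TS-N).
\]
By the identity $(a+b)(b+c)(c+a)=(a+b+c)(ab+bc+ca)-abc$ with $a=\e$, $b=\e^q$, $c=\e^{q^2}$, this gives
\[
\det(A_\e)=4(\e+\e^q)(\e^q+\e^{q^2})(\e^{q^2}+\e)=4\,N_{\fqt/\fq}(\e+\e^q).
\]

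\textbf{Step 3: show the norm is nonzero.} Since $q$ is odd, $4\neq 0$. The norm vanishes only if $\e+\e^q=0$, i.e. $\e^{q-1}=-1$ for some $\e\neq0$. Raising to the power $q^2+q+1$, which is odd, gives $1=\e^{q^3-1}=(-1)^{q^2+q+1}=-1$. This contradicts $q$ odd, so $\det(A_\e)\neq 0$ for all $\e\in\fqt^\ast$.

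The main obstacle is the bookkeeping in Steps 1 and 2. One must confirm that the $q$-power twists in $A_\e$ really are the (transposed) Dickson pattern, and that the evaluated coefficients regroup exactly into $TS-N$. If the matching with Proposition \ref{Prop:Condition} proves awkward, the fallback is to expand the $3\times3$ determinant of $A_\e$ directly in the symmetric quantities $T$, $S$, $N$, aiming for the same factorization.
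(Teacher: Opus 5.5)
Your proof is correct. It arrives at the same factorization $\det(A_\epsilon)=4(\epsilon+\epsilon^q)(\epsilon^q+\epsilon^{q^2})(\epsilon^{q^2}+\epsilon)$ as the paper, but by a different route.

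\textbf{The paper's route.} The paper expands the $3\times 3$ determinant directly as $P-N$, with two long monomial expansions. It collects terms and factors the resulting polynomial by inspection. Non-vanishing then comes from Lemma \ref{Lemma_L} applied to $X^{q^2}+X^q$, whose circulant determinant is $2\neq 0$; the other two factors are its Frobenius conjugates.

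\textbf{Your route.} You observe that $A_\epsilon$ is the transpose of the Dickson matrix of $F_\epsilon$ for $f_{1,1,1,1/2,1/2}$. This checks out entry by entry, and it is the same identification the paper only makes later, in Theorem \ref{teo: two pent}. It lets you specialize the general determinant formula of Proposition \ref{Prop:Condition}, and your specialized coefficients $0,4,4,8$ are right. The identity $(a+b)(b+c)(c+a)=(a+b+c)(ab+bc+ca)-abc$ then explains why the result is $4\,N_{\mathbb{F}_{q^3}/\mathbb{F}_q}(\epsilon+\epsilon^q)$, instead of leaving it as an observed factorization. Your non-vanishing step is elementary and self-contained: $\epsilon^{q-1}=-1$ would force $1=(-1)^{q^2+q+1}=-1$. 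It also makes clear exactly where the oddness of $q$ enters.

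\textbf{Trade-off.} Your computation depends on the general determinant formula, which the paper asserts in the proof of Proposition \ref{Prop:Condition} without showing the expansion. That formula is correct. The paper's independent brute-force expansion effectively cross-checks it at these parameter values. Your stated fallback, expanding directly in $T,S,N$, would remove the dependence altogether.

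A minor wording slip: each mixed block in Proposition \ref{Prop:Condition} has three monomials, not six. Together the two blocks account for the six monomials making up $TS-3N$.
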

\begin{proof}
	The determinant of the matrix $A_\e$ is given by
	\begin{align*}
		\det(A_{\e}) &= (2 \e + \e^q + \e^{q^2})^{q^2 + q + 1} + (\e + \e^q)^{q^2 + q +1} + (\e + \e^{q^2})^{q^2 + q + 1}\\
		& - (2 \e + \e^q + \e^{q^2})^q (\e + \e^q)^{q^2} (\e + \e^{q^2})\\
		& - (2 \e + \e^q + \e^{q^2}) (\e + \e^q)^{q} (\e + \e^{q^2})^{q^2}\\
		& - (2 \e + \e^q + \e^{q^2})^{q^2} (\e + \e^q) (\e + \e^{q^2})^q .
	\end{align*}
			
		To obtain an expression for $\det(A_{\e})$, we do the calculations separately, using the facts that $\epsilon \in \mathbb{F}_{q^3}^{\ast}$ and that $q$ is odd. 
	Define 
	\[
	P:=(2 \e + \e^q + \e^{q^2})^{q^2 + q + 1} + (\e + \e^q)^{q^2 + q +1} + (\e + \e^{q^2})^{q^2 + q + 1},
	\]
	and 
	\begin{align*}
		N&=(2 \e + \e^q + \e^{q^2})^q (\e + \e^q)^{q^2} (\e + \e^{q^2})+(2 \e + \e^q + \e^{q^2}) (\e + \e^q)^{q} (\e + \e^{q^2})^{q^2}\\
		&+(2 \e + \e^q + \e^{q^2})^{q^2} (\e + \e^q) (\e + \e^{q^2})^q.
	\end{align*}
	Then $\det(A_{\e})=P-N$.
	
	At first we compute $P$,
	\begin{align*}
		P &= (2 \e^{q^2} + \e + \e^{q}) (2 \e^q + \e^{q^2} + \e) (2 \e + \e^q + \e^{q^2}) + (\e^{q^2} + \e)(\e^q + \e^{q^2})(\e + \e^{q})\\
		& + (\e^{q^2} + \e^q) (\e^q + \e)(\e + \e^{q^2})\\
		&= 2 \e^{3 q^2} + 9 \e^{2q^2 + q} + 9 \e^{2q^2 + 1} + 9 \e^{q^2 + 2q} + 20 \e^{q^2 + q + 1} + 9 \e^{q^2 + 2} + 2 \e^{3q} + 9 \e^{2q+1}\\
		& + 9 \e^{q+2} + 2\e^3.
	\end{align*} 
	
	Now, we compute $N$,
	\begin{align*}
		N &= (2 \e + \e^q + \e^{q^2})^q (\e + \e^q)^{q^2} (\e + \e^{q^2})\\
		&+(2 \e + \e^q + \e^{q^2}) (\e + \e^q)^{q} (\e + \e^{q^2})^{q^2}\\
		&+(2 \e + \e^q + \e^{q^2})^{q^2} (\e + \e^q) (\e + \e^{q^2})^q\\
		&=\e^{3q^2} + 2 \e^{2q^2 + q} + 3 \e^{2q^2 + 1} + 4 \e^{q^2 + q + 1} + 3 \e^{q^2 + 2} + 2 \e^{q+2} + \e^3\\
		&+\e^{3q^2} + 3\e^{2q^2 + q} + 2 \e^{2q^2 + 1} + 3 \e^{q^2 + 2q} + 4 \e^{q^2 + q + 1} + \e^{3q} + 2 \e^{2q+1}\\
		&+2 \e^{q^2 + 2q} + 4 \e^{q^2 + q + 1} + 2 \e^{q^2 + 2} + \e^{3q} + 3 \e^{2q+1} + 3 \e^{q+2} + \e^3\\
		&= 2(\e^{3q^2} + \e^{3q} + \e^3) + 5(\e^{2q^2+q} + \e^{2q^2+1} + \e^{q^2+2q} + \e^{2q+1} + \e^{q^2+2} + \e^{q+2}) + 12\e^{q^2+q+1}.
	\end{align*}

	Then, we have that 
	$$
	\det(A_{\e}) = 4 \e^{2q^2 + q} + 4 \e^{2q^2 + 1} + 4 \e^{q^2 + 2q} + 8 \e^{q^2 + q + 1} + 4 \e^{q^2 + 2} + 4 \e^{2q+1} + 4 \e^{q+2}.
	$$
	So, we get 
	\[
	\det(A_{\e}) = 4 (\e^{q^2}+\e^q)(\e^{q^2}+\e)(\e^q+\e).
	\]
	By Lemma \ref{Lemma_L}, for $p(X)=X^{q^2}+X^q \in \F_{q^3}[X]$, we have $\e^{q^2}+\e^q \neq 0$ for all $\e \in \F_{q^3}^\ast$, thus we also have  $\e^{q^2}+\e \neq 0$ and $\e^{q}+\e \neq 0$ for all $\e \in \F_{q^3}^\ast$. Hence, $\det(A_\e) \neq 0$ for all $\e \in \mathbb{F}_{q^3}^{\ast}$.
\end{proof}

\begin{proposition} \label{lemma Bh}
	Let $q$ be a power of an odd prime. For $\e \in \mathbb{F}_{q^3}^{\ast}$ consider the matrix
	$$
	B_{\e}:=\left( \begin{array}{ccc}
		2 \epsilon - \epsilon^q - \epsilon^{q^2} & ( 2\epsilon^{q^2} - \epsilon)^q & (2\epsilon^q -\epsilon)^{q^2}\\
		2\epsilon^q -\epsilon & (2 \epsilon - \epsilon^q - \epsilon^{q^2})^q & ( 2\epsilon^{q^2} - \epsilon )^{q^2}\\
		2\epsilon^{q^2} - \epsilon & (2\epsilon^q -\epsilon)^q & (2 \epsilon - \epsilon^q - \epsilon^{q^2})^{q^2}
	\end{array}\right).
	$$
	Then, $\det(B_{\e}) \neq 0$ for all $\e \in \mathbb{F}_{q^3}^{\ast}$. 
\end{proposition}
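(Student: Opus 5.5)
The plan follows the proof of Proposition \ref{lemma Ag}: compute $\det(B_\e)$ explicitly, factor it into pieces of the form $N(\alpha\e+\beta\e^q+\gamma\e^{q^2})$ or similar, and show that no factor vanishes for $\e\neq 0$.

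First, I will set $x=\e$, $y=\e^q$, $z=\e^{q^2}$. Since $\e\in\mathbb{F}_{q^3}$, the Frobenius acts as the cyclic permutation $x\mapsto y\mapsto z\mapsto x$. The matrix then becomes
$$
B_\e=\begin{pmatrix} 2x-y-z & 2x-y & 2x-z\\ 2y-x & 2y-z-x & 2y-z\\ 2z-x & 2z-y & 2z-x-y\end{pmatrix}.
$$
Its column sums are $y+z$, $z+x$ and $x+y$. This suggests simplifying by column operations before expanding, for instance replacing the first column by the sum of all columns. Then I will expand the determinant as a cubic form in $x,y,z$. It is invariant under the cyclic shift, as $\det(A_\e)$ was. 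I will group it, as in the paper, into the orbits $x^3+y^3+z^3$, $x^2y+y^2z+z^2x$, $x^2z+y^2x+z^2y$ and $xyz$.

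As a sanity check, at $x=y=z$ (that is, $\e\in\mathbb{F}_q^*$) the matrix becomes $\begin{pmatrix}0&x&x\\x&0&x\\x&x&0\end{pmatrix}$, whose determinant is $2x^3\neq 0$. This pins down the normalization of the factorization.

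Next, I will try to factor the cubic into cyclic-invariant factors. The most natural candidates are a linear factor times a quadratic one, using $x+y+z=\mathrm{Tr}(\e)$, or a product of three Frobenius conjugates $\ell(x,y,z)\,\ell(y,z,x)\,\ell(z,x,y)$ of a linear form $\ell=ax+by+cz$ with $a,b,c\in\mathbb{F}_q$. In the second case the factor $\ell(x,y,z)=a\e+b\e^q+c\e^{q^2}$ is the value at $\e$ of the $q$-polynomial $aX+bX^q+cX^{q^2}$, and the other two factors are its $q$- and $q^2$-powers. By Lemma \ref{Lemma_L}, this factor is nonzero for all $\e\in\mathbb{F}_{q^3}^*$ exactly when $a^3+b^3+c^3-3abc\neq 0$. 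I expect this to be a condition only on the characteristic of $\mathbb{F}_q$, for instance that $q$ is not a power of $3$, or a condition that holds for all odd $q$.

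If instead a quadratic factor such as $x^2+y^2+z^2-xy-yz-zx$ appears, I will write it as $\tfrac12\big((x-y)^2+(y-z)^2+(z-x)^2\big)$. Then I will argue directly: since $y=x^q$ and so on, a zero forces a relation of the form $L(\e)=0$ for a suitable $q$-polynomial with coefficients in $\mathbb{F}_q$, possibly after adjoining a cube root of unity. Lemma \ref{Lemma_L} then applies again. As a backup, I would check the bilinear form over the $\mathbb{F}_q$-basis directly.

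The main obstacle is the factorization step: producing a clean product decomposition of the expanded cubic. In particular, I need to see whether characteristic $3$ causes an issue, since a determinant like $(a+b+c)(\cdots)$ can degenerate when $3=0$. I will try the linear-form ansatz first, matching coefficients against the four cyclic orbit sums and against the value $2x^3$ at $x=y=z$. Only if this fails will I fall back on the trace-times-quadratic ansatz.
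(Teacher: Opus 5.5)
Your plan is the paper's proof, though the one substantive step (the explicit factorization) is left undone; it does succeed on your first ansatz, since expanding gives $\det(B_\e)=-2(x^3+y^3+z^3)+2\sum_{\mathrm{sym}}x^2y-4xyz=2(x-y+z)(x+y-z)(-x+y+z)$, which is the constant $2$ times the three Frobenius conjugates of $\ell=x-y+z$. Lemma \ref{Lemma_L} applied to $X^{q^2}-X^q+X$ gives $1-1+1+3=4\neq 0$ for every odd $q$, so the characteristic-$3$ concern and the trace-times-quadratic fallback never arise.
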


\begin{proof}
	The determinant of the matrix $
	B_{\e}$  is given by
	$$
	\begin{array}{rl}
		\det(B_{\e}) &= (2 \epsilon - \epsilon^q - \epsilon^{q^2})^{q^2 + q + 1} + (2\epsilon^q -\epsilon)^{q^2 + q +1} + (2\epsilon^{q^2} - \epsilon)^{q^2 + q + 1}\\
		& - (2 \epsilon - \epsilon^q - \epsilon^{q^2})^q (2\epsilon^q -\epsilon)^{q^2} (2\epsilon^{q^2} - \epsilon)\\
		& - (2 \epsilon - \epsilon^q - \epsilon^{q^2}) (2\epsilon^q -\epsilon)^{q} (2\epsilon^{q^2} - \epsilon)^{q^2}\\
		& - (2 \epsilon - \epsilon^q - \epsilon^{q^2})^{q^2} (2\epsilon^q -\epsilon) (2\epsilon^{q^2} - \epsilon)^q .\\
	\end{array}
	$$
	To obtain an expression for $\det(B_{\e})$, we do the calculations separately, using the facts that $\epsilon \in \mathbb{F}_{q^3}^{\ast}$ and that $q$ is odd.
	
	$\bullet$ The first line of the determinant reduces to:
	
	$$\begin{array}{l}
		(2 \epsilon - \epsilon^q - \epsilon^{q^2})^{q^2 + q + 1} + (2\epsilon^q -\epsilon)^{q^2 + q +1} + (2\epsilon^{q^2} - \epsilon)^{q^2 + q + 1}\\
		\mbox{ }= (2 \epsilon^{q^2} - \epsilon - \epsilon^{q}) (2 \epsilon^q - \epsilon^{q^2} - \epsilon) (2 \epsilon - \epsilon^q - \epsilon^{q^2})\\
		\mbox{ } \mbox{ } \mbox{ } \mbox{ } + (2\epsilon -\epsilon^{q^2})(2\epsilon^{q^2} -\epsilon^q)(2\epsilon^q -\epsilon) + (2\epsilon^{q} - \epsilon^{q^2}) (2\epsilon - \epsilon^q)(2\epsilon^{q^2} - \epsilon)\\
		\mbox{ } = 2\e^{3 q^2} -5\e^{2q^2 + q} -5 \e^{2q^2 + 1} - 5\e^{q^2 + 2q} + 26\e^{q^2 + q + 1} - 5 \e^{q^2+2} + 2\e^{3q}\\
		\mbox{ } \mbox{ } \mbox{ } \mbox{ }  - 5\e^{2q+1} - 5 \e^{q+2} +2\e^3.
	\end{array}$$
	
	\bigskip
	
	$\bullet$ Now we simplify, one by one,  the second, third and fourth line of the determinant:\\
	
	$\diamond$ $(2 \epsilon - \epsilon^q - \epsilon^{q^2})^q (2\epsilon^q -\epsilon)^{q^2} (2\epsilon^{q^2} - \epsilon) = 2\e^{3q^2} - 4 \e^{2q^2 + q} - 3 \e^{2q^2 + 1} + 10 \e^{q^2 + q + 1} - 3\e^{q^2 + 2} -4\e^{q + 2} + 2 \e^{3}$.
	
	\medskip
	
	$\diamond$ $(2 \epsilon - \epsilon^q - \epsilon^{q^2}) (2\epsilon^q -\epsilon)^{q} (2\epsilon^{q^2} - \epsilon)^{q^2} = 2\e^{3q^2} -3\e^{2q^2 + q} - 3 \e^{q^2 + 2q} - 4\e^{2q^2 + 1} + 10 \e^{q^2 + q + 1}+2\e^{3q} -4\e^{2q+1}$.
	
	\medskip
	
	$\diamond$ $(2 \epsilon - \epsilon^q - \epsilon^{q^2})^{q^2} (2\epsilon^q -\epsilon) (2\epsilon^{q^2} - \epsilon)^q = -4\e^{q^2 + 2q} + 10 \e^{q^2 + q + 1} - 4 \e^{q^2 + 2} + 2 \e^{3q} - 3\e^{2q+1} -3\e^{q+2} + 2\e^3$.
	
	\medskip
	
	And we get that the sum of the three last lines is given by:
	
	$-(	4\e^{3q^{2}} - 7\e^{2q^{2}+q} - 7\e^{2q^{2}+1} - 7\e^{q^{2}+2q} + 30\e^{q^{2}+q+1} - 7\e^{q^{2}+2} + 4\e^{3q} - 7\e^{2q+1} - 7\e^{q+2} + 4\e^{3}).$
	
	\medskip

	Finally, we conclude that
	$$
	\det(B_{\e}) = -2\e^{3q^2} + 2 \e^{2q^2 + q} + 2 \e^{2q^2 + 1} + 2 \e^{q^2 + 2q} -4\e^{q^2 + q + 1} + 2\e^{q^2 + 2} -2\e^{3q} + 2 \e^{2q+1} + 2 \e^{q+2} -2\e^3.
	$$

	Then, for 
	$$
	\begin{array}{rl}
		p(X):=& -2X^{3q^2} + 2 X^{2q^2 + q} + 2 X^{2q^2 + 1} + 2 X^{q^2 + 2q} -4X^{q^2 + q + 1} + 2X^{q^2 + 2} -2X^{3q}  \\
		& + 2 X^{2q+1} + 2 X^{q+2} -2X^3 \in \mathbb{F}_{q^3}[X].
	\end{array}
	$$
	we have  $\det(B_{\e}) = p(\e)$. So, $\det(B_{\e}) \neq 0$ for every $\e \in \mathbb{F}_{q^3}^{\ast}$ if and only if $p(X)$ has no roots in $\mathbb{F}_{q^3}^{\ast}$.
	
	Note that, $p(X)$ factors exactly as 
	$$p(X) = 2 (X^{q^2}-X^q+X)(X^{q^2}+X^q - X)(-X^{q^2}+X^q + X).$$
	
	By Lemma \ref{Lemma_L}, the polynomial $L(X)=X^{q^2}-X^q+X$  has no roots in $\mathbb{F}_{q^3}^*$. 
	The other two factors of $p(X)$ in the product are simply the $q$-th and $q^2$-th powers of $L(X)$ over $\F_{q^3}$, so they also have no roots in $\mathbb{F}_{q^3}^*$. We conclude that $p(X)$ has no roots in $\mathbb{F}_{q^3}^{\ast}$. 
	
\end{proof}

	According to Proposition \ref{Prop:Condition}, the planarity of $f_{E,A,B,C,D}$ reduces to the condition that the determinant $\Delta_{F_{\epsilon}}$ is nonzero for all $\epsilon \in \mathbb{F}_{q^3}^*$. A direct treatment of this determinant in terms of arbitrary coefficients is, however, algebraically non-trivial. However, motivated by the organized factorizations observed in  Propositions \ref{lemma Ag} and \ref{lemma Bh}, we can seek specific algebraic conditions on the coefficients $A, B, C, D$, and $E$ that forces $\Delta_{F_{\epsilon}}$ to be factored into a product of simpler $q$-polynomials. If we can guarantee that none of these factors have roots in $\mathbb{F}_{q^3}^*$, then the determinant $\Delta_{F_{\epsilon}}$ will remain nonzero, automatically satisfying the condition of planarity. 
	
	The following theorem establishes a general framework for this approach. It provides a specific system of equations over $\mathbb{F}_q$ which, when satisfied, guarantees that the determinant $\Delta_{F_{\epsilon}}$ factors exactly into three linear terms with no non-trivial roots.

		\begin{theorem}\label{prod-polis}
		Let
		$P_i(X) = a_iX^{q^2} + b_iX^q + c_iX, \text{ for } i=1, 2, 3$
		be polynomials in $\mathbb{F}_q[X]$ with no roots in $\mathbb{F}_{q^3}^*$. Suppose that
		\begin{enumerate}
			\item $a_1a_2a_3 = b_1b_2b_3 = c_1c_2c_3,$
			\item $a_1a_2b_3 + a_1b_2a_3 + b_1a_2a_3 = b_1b_2c_3 + b_1c_2b_3 + c_1b_2b_3 = a_1c_2c_3 + c_1c_2a_3 + c_1a_2c_3,$ and 
			\item $a_1a_2c_3 + a_1c_2a_3 + c_1a_2a_3 = b_1c_2c_3 + c_1c_2b_3 + c_1b_2c_3 = a_1b_2b_3 + b_1a_2b_3 + b_1b_2a_3.$
		\end{enumerate}
		Define
		$$\alpha = a_1a_2a_3, \quad \beta = a_1a_2b_3 + a_1b_2a_3 + b_1a_2a_3, \quad \gamma = a_1a_2c_3 + a_1c_2a_3 + c_1a_2a_3$$
		and
		$$\delta = a_1b_2c_3 + a_1c_2b_3 + b_1a_2c_3 + b_1c_2a_3 + c_1a_2b_3 + c_1b_2a_3.$$
		If the system of equations
		\begin{equation} \label{sistema}
		\begin{cases}
				-A^2D + ABE - B^2C = 2\alpha \\
				A^2C - ABD - 2ACD + 2AE^2 + B^2E - 2BCE + 2BD^2 = 2\beta \\
				A^2E - ABC + 2AC^2 - 2ADE + B^2D - 2BCD + 2BE^2 = 2\gamma \\
				A^3 + B^3 + 4C^3 - 12CDE + 4D^3 + 4E^3 = 2\delta
		\end{cases}
		\end{equation}
		has a solution $(E, A, B, C, D)$ in $\mathbb{F}_q^5$, then the polynomial
		$$f_{E,A,B,C,D}(X) = EX^2 + AX^{q+1}+ BX^{q^2+1}+CX^{2q} +DX^{2q^2}$$
		is planar.
	\end{theorem}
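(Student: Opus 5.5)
The plan is to show that, under the hypotheses, the determinant $\Delta_{F_\epsilon}$ from Proposition \ref{Prop:Condition} equals $4P_1(\epsilon)P_2(\epsilon)P_3(\epsilon)$. The factorization itself will be proved by matching the coefficients of the cubic monomials in $\epsilon,\epsilon^q,\epsilon^{q^2}$.

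Write $x=\epsilon$, $y=\epsilon^q$, $z=\epsilon^{q^2}$, so $P_i(\epsilon)=a_iz+b_iy+c_ix$. Expanding $P_1P_2P_3$ gives a cubic form in $x,y,z$, and we read off its coefficients from the definitions:
\begin{itemize}
\item the coefficient of $z^3$ is $\alpha=a_1a_2a_3$;
\item the coefficient of $y^3$ is $b_1b_2b_3$;
\item the coefficient of $x^3$ is $c_1c_2c_3$;
\item the coefficient of $z^2y=\epsilon^{2q^2+q}$ is $a_1a_2b_3+a_1b_2a_3+b_1a_2a_3=\beta$;
\item the coefficient of $y^2x=\epsilon^{2q+1}$ is $b_1b_2c_3+\cdots$;
\item the coefficient of $x^2z=\epsilon^{2+q^2}$ is $a_1c_2c_3+\cdots$;
\item the coefficient of $z^2x=\epsilon^{2q^2+1}$ is $\gamma$;
\item the coefficient of $x^2y=\epsilon^{2+q}$ is $b_1c_2c_3+\cdots$;
\item the coefficient of $y^2z=\epsilon^{2q+q^2}$ is $a_1b_2b_3+\cdots$;
\item the coefficient of $xyz$ is $\delta$.
\end{itemize}

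Hypothesis (1) says the three pure cubes share the coefficient $\alpha$. Hypothesis (2) says the orbit $\{z^2y,\,y^2x,\,x^2z\}$ under the cyclic shift $x\to y\to z\to x$ (induced by Frobenius) has common coefficient $\beta$. Hypothesis (3) says the other orbit $\{z^2x,\,x^2y,\,y^2z\}$ has common coefficient $\gamma$. Hence
\[
P_1P_2P_3=\alpha(\epsilon^3+\epsilon^{3q}+\epsilon^{3q^2})+\beta(\epsilon^{2q^2+q}+\epsilon^{2q+1}+\epsilon^{2+q^2})+\gamma(\epsilon^{2q^2+1}+\epsilon^{2+q}+\epsilon^{2q+q^2})+\delta\,\epsilon^{1+q+q^2}.
\]

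Next we compare this with the expression in Proposition \ref{Prop:Condition}. There the coefficient of the pure-cube group is $2(-A^2D+ABE-B^2C)$, which by the first equation of \eqref{sistema} equals $4\alpha$. The coefficient of $\epsilon^{1+q+q^2}$ is $2(A^3+B^3+4C^3-12CDE+4D^3+4E^3)=4\delta$. The two mixed groups in the proposition have coefficients $2\cdot(2\beta)$ and $2\cdot(2\gamma)$, provided each orbit is paired with the correct equation of \eqref{sistema}.

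The main obstacle, and the point I would check carefully, is that pairing. In the proposition the group $\epsilon^{2+q}+\epsilon^{1+2q^2}+\epsilon^{2q+q^2}$ carries the second equation's expression, and $\epsilon^{2+q^2}+\epsilon^{1+2q}+\epsilon^{q+2q^2}$ carries the third. Note that $\epsilon^{1+2q^2}$ is $xz^2$, which lies in the $\gamma$ orbit under my listing above, so the labeling of the orbits may not match directly. The fix is that the $\beta$-orbit $\{z^2y,y^2x,x^2z\}=\{\epsilon^{2q^2+q},\epsilon^{2q+1},\epsilon^{2+q^2}\}$ is exactly the proposition's third group. So the correspondence must be checked against how \eqref{sistema} labels $\beta$ and $\gamma$, interchanging the roles of $a_i$ and $c_i$ if needed; this interchange amounts to relabeling $P_i$ by reversing coefficients, and the hypotheses are symmetric enough to allow it. Once the matching is confirmed term by term, we get $\Delta_{F_\epsilon}=4P_1(\epsilon)P_2(\epsilon)P_3(\epsilon)$.

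Finally, since $q$ is odd, $4\neq0$, and by assumption each $P_i$ has no roots in $\mathbb{F}_{q^3}^*$; Lemma \ref{Lemma_L} gives a concrete criterion for this. Therefore $\Delta_{F_\epsilon}\neq0$ for every $\epsilon\in\mathbb{F}_{q^3}^*$, and Proposition \ref{Prop:Condition} shows that $f_{E,A,B,C,D}$ is planar. The hypothesis that not all of $A,\dots,E$ vanish is automatic, since otherwise $\Delta_{F_\epsilon}=0$.
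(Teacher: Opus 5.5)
There is a gap, at exactly the step you flagged. Your route is the paper's: rewrite $\Delta_{F_\epsilon}$ via \eqref{sistema} and recognize a product of three linearized values. But the identity you finally assert, $\Delta_{F_\epsilon}=4P_1(\epsilon)P_2(\epsilon)P_3(\epsilon)$, is false whenever $\beta\neq\gamma$.

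Write $S_0=\epsilon^3+\epsilon^{3q}+\epsilon^{3q^2}$, $S_1=\epsilon^{2+q}+\epsilon^{1+2q^2}+\epsilon^{2q+q^2}$, $S_2=\epsilon^{2+q^2}+\epsilon^{1+2q}+\epsilon^{q+2q^2}$ and $S_3=\epsilon^{1+q+q^2}$. Your own coefficient list gives $P_1P_2P_3=\alpha S_0+\gamma S_1+\beta S_2+\delta S_3$. On the other hand, \eqref{sistema} and Proposition \ref{Prop:Condition} give $\tfrac{1}{4}\Delta_{F_\epsilon}=\alpha S_0+\beta S_1+\gamma S_2+\delta S_3$. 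Hence
\[
P_1(\epsilon)P_2(\epsilon)P_3(\epsilon)-\tfrac{1}{4}\Delta_{F_\epsilon}=(\gamma-\beta)(\epsilon-\epsilon^q)(\epsilon^q-\epsilon^{q^2})(\epsilon-\epsilon^{q^2}),
\]
which is nonzero for every $\epsilon\in\mathbb{F}_{q^3}\setminus\mathbb{F}_q$ when $\beta\neq\gamma$.

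Nothing in the hypotheses prevents $\beta\neq\gamma$. In characteristic $\neq 3$, take $f=(X+2X^q)^2=X^2+4X^{q+1}+4X^{2q}$. It solves \eqref{sistema} for the root-free triple $18(X^{q^2}+2X^q)$, $2X^{q^2}+X$, $X^q+2X$. This triple satisfies (1)--(3) with $\beta=36\neq 72=\gamma$.

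The paper's proof claims the same factorization into $P_1P_2P_3$ without checking this pairing. It is literally correct only when $\beta=\gamma$, which happens to hold in all of its applications (Theorems \ref{samefactor}, \ref{teo1}, \ref{2parametros}). So your suspicion is justified, but leaving the interchange as something to do ``if needed'' leaves the proof unfinished.

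To close the gap, commit to the interchange: set $\tilde P_i(X)=c_iX^{q^2}+b_iX^q+a_iX$. As linear forms in $x=\epsilon$, $y=\epsilon^q$, $z=\epsilon^{q^2}$, $\tilde P_i$ is $P_i$ with $x$ and $z$ swapped. This transposition fixes $S_0$ and $S_3$ and exchanges $S_1$ and $S_2$. Hence $\tilde P_1\tilde P_2\tilde P_3=\alpha S_0+\beta S_1+\gamma S_2+\delta S_3$, and so $\Delta_{F_\epsilon}=4\tilde P_1(\epsilon)\tilde P_2(\epsilon)\tilde P_3(\epsilon)$. Swapping $a_i$ and $b_i$ instead works equally well.

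You then still have to show that each $\tilde P_i$ has no roots in $\mathbb{F}_{q^3}^*$, since the theorem assumes this only for the $P_i$. This follows from Lemma \ref{Lemma_L}, because $a_i,b_i,c_i\in\mathbb{F}_q$ and the criterion $a^3+b^3+c^3-3abc\neq 0$ is symmetric in $a,b,c$. With that in place, your final step ($4\neq 0$ in odd characteristic, then Proposition \ref{Prop:Condition}) proves the theorem as stated.
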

	\begin{proof}
	By the system of equations, we can rewrite the determinant $\Delta_{F_{\epsilon}}$ given in Proposition \ref{Prop:Condition} as
		\begin{equation} \label{determinante teorema}
		\Delta_{F_{\epsilon}} = \alpha(\epsilon^3 + \epsilon^{3q} + \epsilon^{3q^2}) + \beta(\epsilon^{2+q} + \epsilon^{1+2q^2} + \epsilon^{2q+q^2}) + \gamma(\epsilon^{2+q^2} + \epsilon^{1+2q} + \epsilon^{q+2q^2}) + \delta \epsilon^{1+q+q^2}.
		\end{equation}
		By the definitions of $\alpha, \beta, \gamma$ and $\delta$, this expression expands and factors exactly as the product of the three $q$-polynomials evaluated at $\epsilon$
		$$
		P_1(\e)P_2(\e)P_3(\e) = (a_1\epsilon^{q^2} + b_1\epsilon^q + c_1\epsilon)(a_2\epsilon^{q^2} + b_2\epsilon^q + c_2\epsilon)(a_3\epsilon^{q^2} + b_3\epsilon^q + c_3\epsilon).
		$$
	Since by hypothesis none of these $q$-polynomials have roots in $\F_{q^3}^*$, it follows that $P_i(\epsilon) \neq 0$ for all $\e \in \mathbb{F}_{q^3}^*$ and $i \in \{1, 2, 3\}$. Therefore, $\Delta_{F_{\epsilon}} \neq 0$ for all $\epsilon \in \mathbb{F}_{q^3}^*$, and by Proposition \ref{Prop:Condition}, it follows that $f_{E,A,B,C,D}(X)$ is planar. 
	\end{proof}
	
	\begin{corollary}\label{cor1poli}
		Let $aX^{q^2} + bX^q + cX \in \mathbb{F}_q[x]$ be a polynomial with no roots in $\mathbb{F}_{q^3}^*$. Define
		$$\alpha = abc, \quad \beta = a^2b + ac^2 + b^2c, \quad \gamma = a^2c + ab^2 + bc^2, \quad \delta = a^3 + b^3 + c^3 + 3abc.$$
		If the system of equations
		\begin{equation*}
			\begin{cases}
				-A^2D + ABE - B^2C = 2\alpha \\
				A^2C - ABD - 2ACD + 2AE^2 + B^2E - 2BCE + 2BD^2 = 2\beta \\
				A^2E - ABC + 2AC^2 - 2ADE + B^2D - 2BCD + 2BE^2 = 2\gamma \\
				A^3 + B^3 + 4C^3 - 12CDE + 4D^3 + 4E^3 = 2\delta
			\end{cases}
		\end{equation*}
		has a solution $(A, B, C, D, E)$ in $\mathbb{F}_q^5$, then the polynomial
		$$f_{E,A,B,C,D}(X) = EX^2 + AX^{q+1}+ BX^{q^2+1}+CX^{2q} +DX^{2q^2}$$
		is planar.
	\end{corollary}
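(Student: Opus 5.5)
The plan is to obtain the corollary as the special case of Theorem \ref{prod-polis} in which the three polynomials $P_1,P_2,P_3$ are the Frobenius conjugates of the single polynomial $L(X)=aX^{q^2}+bX^q+cX$. Since $a,b,c\in\mathbb{F}_q$ and $\epsilon^{q^3}=\epsilon$ for $\epsilon\in\mathbb{F}_{q^3}$, we have
\[
L(\epsilon)^q = a\epsilon + b\epsilon^{q^2} + c\epsilon^q, \qquad L(\epsilon)^{q^2} = a\epsilon^q + b\epsilon + c\epsilon^{q^2}.
\]
So I would set
\[
P_1(X)=aX^{q^2}+bX^q+cX,\quad P_2(X)=bX^{q^2}+cX^q+aX,\quad P_3(X)=cX^{q^2}+aX^q+bX,
\]
that is, $(a_1,b_1,c_1)=(a,b,c)$, $(a_2,b_2,c_2)=(b,c,a)$ and $(a_3,b_3,c_3)=(c,a,b)$. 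Because $P_2(\epsilon)=L(\epsilon)^q$ and $P_3(\epsilon)=L(\epsilon)^{q^2}$, and $L$ has no roots in $\mathbb{F}_{q^3}^*$ by hypothesis, none of $P_1,P_2,P_3$ has a root in $\mathbb{F}_{q^3}^*$.

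Next I verify hypotheses (1)--(3) of Theorem \ref{prod-polis}. For (1), each of the products $a_1a_2a_3$, $b_1b_2b_3$ and $c_1c_2c_3$ equals $abc$. For (2), the first expression is
\[
a_1a_2b_3+a_1b_2a_3+b_1a_2a_3=a^2b+ac^2+b^2c .
\]
The other two expressions in (2) are obtained from the first by the cyclic substitution $a\mapsto b\mapsto c\mapsto a$, which leaves the chosen triple of polynomials invariant up to reordering. Under this substitution $a^2b+ac^2+b^2c$ is sent to itself, so all three expressions coincide. Condition (3) is handled the same way, with common value $a^2c+ab^2+bc^2$.

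Finally, I compute the quantities $\alpha,\beta,\gamma,\delta$ of the theorem for this choice:
\begin{itemize}
\item $\alpha=abc$;
\item $\beta=a^2b+ac^2+b^2c$;
\item $\gamma=a_1a_2c_3+a_1c_2a_3+c_1a_2a_3=ab^2+a^2c+bc^2$;
\item $\delta$ is the sum of the six terms $acb$, $a^3$, $b^3$, $bac$, $cba$, $c^3$, so $\delta=a^3+b^3+c^3+3abc$.
\end{itemize}
These are exactly the values in the statement of the corollary. Hence the system there is precisely system \eqref{sistema}, and Theorem \ref{prod-polis} gives the planarity of $f_{E,A,B,C,D}$.

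The only step requiring care is the bookkeeping in the cyclic-symmetry check of conditions (2) and (3). I would settle it by writing out all three expressions explicitly in each condition rather than relying on the symmetry argument alone.
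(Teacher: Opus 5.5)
Your proposal is correct and is exactly the paper's argument. You apply Theorem~\ref{prod-polis} to $L$ and its Frobenius conjugates $P_2(\epsilon)=L(\epsilon)^q$, $P_3(\epsilon)=L(\epsilon)^{q^2}$, and your explicit checks of conditions (1)--(3) and of $\alpha,\beta,\gamma,\delta$, which the paper leaves implicit, are all right.

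A side remark about the paper, not your argument: expanding the determinant of Proposition~\ref{Prop:Condition}, the system actually forces $\Delta_{F_\epsilon}=4M(\epsilon)^{1+q+q^2}$ with $M(X)=bX^{q^2}+aX^q+cX$, not $4L(\epsilon)^{1+q+q^2}$, because the proof of Theorem~\ref{prod-polis} interchanges the monomial orbits carrying $\beta$ and $\gamma$. This is harmless, since by Lemma~\ref{Lemma_L} both $M$ and $L$ are root-free on $\mathbb{F}_{q^3}^*$ exactly when $a^3+b^3+c^3-3abc\neq 0$.
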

	\begin{proof}
		This is a direct consequence of the previous theorem, by considering the polynomials
		\begin{align*} 
			&P_1(X)=aX^{q^2} + bX^q + cX,\\ 
			&P_2(X)=bX^{q^2} + cX^q + aX, \\
			&P_3(X)=cX^{q^2} + aX^q + bX,
		\end{align*} 
		and noticing that for $\epsilon \in \mathbb{F}_{q^3}^\ast$ we have $P_2(\epsilon)=P_1(\epsilon)^q$ and $P_3(\epsilon)=P_1(\epsilon)^{q^2}$.
	\end{proof}

	\section{Explicit Families of Planar Polynomials}
	
	In this section, we apply the results given in the previous section to construct explicit families of planar polynomials. By choosing suitable $q$-polynomials that have no roots in $\mathbb{F}_{q^3}^*$, we obtain the parameters $\alpha, \beta, \gamma, \delta$ introduced in Theorem \ref{prod-polis}. Solving the resulting system of equations yields several distinct families of planar functions, which are presented below.
	
	In the next theorem we provide a family of  planar trinomials over $\mathbb{F}_{q^3}$. This theorem comes from the most simple choice of $q$-polynomials having no roots in $\mathbb{F}_{q^3}^*$, namely $X^{q^2}$, $X^q$ and $X$.

	\begin{theorem}\label{samefactor}
		Let $q$ be odd and consider the polynomial
		$$f(X) = EX^2 + CX^{2q} +DX^{2q^2}\in \mathbb{F}_q[X].
		$$
		If $C^3 + D^3 + E^3 - 3CDE = 2^{-1}$,  then $f(X)$ is planar over $\mathbb{F}_{q^3}$.  
	\end{theorem}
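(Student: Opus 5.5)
The plan is to apply Theorem \ref{prod-polis} with the simplest possible choice of factors, as the remark before the statement suggests: $P_1(X)=X^{q^2}$, $P_2(X)=X^q$ and $P_3(X)=X$. None of these has a root in $\mathbb{F}_{q^3}^*$, since $\epsilon^{q^i}=0$ forces $\epsilon=0$. In the notation of that theorem the coefficients are $(a_1,b_1,c_1)=(1,0,0)$, $(a_2,b_2,c_2)=(0,1,0)$ and $(a_3,b_3,c_3)=(0,0,1)$.

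First I will check hypotheses (1)--(3). Every monomial that appears there uses at least two indices $i$ with the same type of coefficient ($a$, $b$ or $c$), and each type is nonzero for only one index. So every term vanishes and all three chains of equalities read $0=0=0$. For the same reason $\alpha=\beta=\gamma=0$. In $\delta$, the only nonzero term is $a_1b_2c_3=1$, so $\delta=1$. This is consistent with $P_1P_2P_3(\epsilon)=\epsilon^{1+q+q^2}$.

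Next I will substitute $A=B=0$ into the system \eqref{sistema}. The first three equations have every term containing $A$ or $B$, so they become $0=0$. The fourth becomes $4C^3-12CDE+4D^3+4E^3=2$, that is, $C^3+D^3+E^3-3CDE=2^{-1}$, which is exactly the hypothesis. Here $2^{-1}$ exists because $q$ is odd. Hence $(E,0,0,C,D)$ solves the system, and Theorem \ref{prod-polis} gives that $f=f_{E,0,0,C,D}$ is planar.

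As a cross-check I can read this off Proposition \ref{Prop:Condition} directly. With $A=B=0$ the determinant collapses to $8(C^3+D^3+E^3-3CDE)\,\epsilon^{1+q+q^2}=4\epsilon^{1+q+q^2}$, which is nonzero for $\epsilon\ne0$. I do not expect a real obstacle. The only care needed is the bookkeeping confirming that every term of the hypotheses and of $\alpha,\beta,\gamma$ vanishes, together with the remark that $C,D,E$ are not all zero, which the hypothesis guarantees.
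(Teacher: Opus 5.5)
Your proof is correct and is essentially the paper's own argument. You apply Theorem \ref{prod-polis} with the tuples $(1,0,0),(0,1,0),(0,0,1)$, obtain $\alpha=\beta=\gamma=0$ and $\delta=1$, and reduce the system with $A=B=0$ to $4(C^3+D^3+E^3-3CDE)=2$. Your extra cross-check through Proposition \ref{Prop:Condition}, giving $\Delta_{F_\epsilon}=8(C^3+D^3+E^3-3CDE)\,\epsilon^{1+q+q^2}$, is also right, and it shows that the weaker hypothesis $C^3+D^3+E^3-3CDE\neq 0$ already suffices.
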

	
	\begin{proof}
		We notice that for $A=B=0$  we have $f_{E,0,0,C,D}=f(X)$.
		We apply Theorem \ref{prod-polis}  choosing the tuples $$(a_1, b_1, c_1)=(1,0,0) , \,  (a_2, b_2, c_2)=(0,1,0), \text{ and } (a_3, b_3, c_3)=(0,0,1).$$ From Lemma \ref{Lemma_L}  the corresponding polynomials   $a_iX^{q^2} + b_iX^q + c_iX$, for $i=1,2,3 $, have no roots in $\mathbb{F}_{q^3}^*$.  This also yields $\alpha=\beta=\gamma=0$ and $\delta=1$ in Theorem \ref{prod-polis}. 
		
		Substituting these conditions into the system (\ref{sistema}), the problem reduces to verifying whether the coefficients of $f(X)$ satisfy the following equation
		\begin{equation}
			\label{eqX}
			4(C^3 + D^3 + E^3 - 3CDE) = 2,
		\end{equation}
		Since $q$ is an odd prime power, the equation \eqref{eqX} reduces to 
		\[
		C^3 + D^3 + E^3 - 3CDE = 2^{-1}. 
		\]
		Thus, under these conditions, $(0,0,C,D,E)$ is a valid solution to the system, and by Theorem \ref{prod-polis}, $f(X)$ is planar over $\mathbb{F}_{q^3}$.
	\end{proof}
	
	Now we present explicit examples of planar quadrinomials and pentanomials over $\mathbb{F}_{q^3}$.

	\begin{theorem}\label{teo1}
		Let $q$ be an odd prime power. Then, the quadrinomial
		\[
		f(X) =  -X^2 + 2 X^{q^2+1} + X^{2q} - X^{2q^2} \in \mathbb{F}_q[X]
		\]
		is planar over $\mathbb{F}_{q^3}$.
	\end{theorem}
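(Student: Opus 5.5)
The plan is to apply Proposition \ref{Prop:Condition} directly and to recognise the resulting determinant as a constant multiple of $\det(B_{\e})$ from Proposition \ref{lemma Bh}. The quadrinomial is $f_{E,A,B,C,D}$ with
$$(E,A,B,C,D)=(-1,\,0,\,2,\,1,\,-1).$$
I will substitute these values into the four coefficients appearing in Proposition \ref{Prop:Condition}.

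\emph{Step 1: the coefficients.} Since $A=0$, most terms vanish, and I expect:
\begin{align*}
-2A^2D+2ABE-2B^2C &= -2\cdot 4\cdot 1=-8,\\
2A^2C-2ABD-4ACD+4AE^2+2B^2E-4BCE+4BD^2 &= -8+8+8=8,\\
2A^2E-2ABC+4AC^2-4ADE+2B^2D-4BCD+4BE^2 &= -8+8+8=8,\\
2A^3+2B^3+8C^3-24CDE+8D^3+8E^3 &= 16+8-24-8-8=-16.
\end{align*}

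\emph{Step 2: identify the determinant.} With these coefficients,
$$\Delta_{F_{\e}} = -8(\e^3+\e^{3q}+\e^{3q^2}) + 8\,S_1 + 8\,S_2 - 16\,\e^{1+q+q^2},$$
where $S_1$ and $S_2$ are the two symmetric sums of mixed monomials in the statement of Proposition \ref{Prop:Condition}. Comparing with the expansion of $\det(B_{\e})$ in the proof of Proposition \ref{lemma Bh}, the coefficients there are $-2$ on the cubes, $2$ on all six mixed monomials $\e^{2q^2+q},\e^{2q^2+1},\e^{q^2+2q},\e^{q^2+2},\e^{2q+1},\e^{q+2}$, and $-4$ on $\e^{q^2+q+1}$. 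So I expect
$$\Delta_{F_{\e}}=4\det(B_{\e})=8\,(\e^{q^2}-\e^q+\e)(\e^{q^2}+\e^q-\e)(-\e^{q^2}+\e^q+\e).$$
Equivalently, in the language of Theorem \ref{prod-polis}, one takes $P_1=X^{q^2}-X^q+X$ with its two cyclic shifts, giving $(\alpha,\beta,\gamma,\delta)=(-2,2,2,-4)$, and checks that $(E,A,B,C,D)=(-1,0,2,1,-1)$ solves system \eqref{sistema}.

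\emph{Step 3: conclude.} Since $q$ is odd, $4\neq0$ in $\mathbb{F}_q$. By Proposition \ref{lemma Bh}, $\det(B_{\e})\neq0$ for all $\e\in\mathbb{F}_{q^3}^*$. This nonvanishing ultimately rests on Lemma \ref{Lemma_L}: for $L(X)=X^{q^2}-X^q+X$ we have $1-1+1+3=4\neq 0$. Hence $\Delta_{F_{\e}}\neq 0$ for all $\e\in\mathbb{F}_{q^3}^*$, and $f$ is planar by Proposition \ref{Prop:Condition}.

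The main obstacle is bookkeeping in Step 2. One has to match the grouping of the six mixed monomials into the two orbit sums of Proposition \ref{Prop:Condition} against the terms of $\det(B_{\e})$. Here this is harmless, because both orbit sums carry the same coefficient $8$. One also has to keep track of the factor-of-$2$ normalisation between Proposition \ref{Prop:Condition} and system \eqref{sistema}; this only changes $\Delta_{F_{\e}}$ by a nonzero scalar in odd characteristic.
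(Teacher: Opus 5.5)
Your proof is correct, and it follows a somewhat different route from the paper's. The paper proves this theorem through Theorem~\ref{prod-polis}. It uses the three conjugate factors $(1,-1,1),(1,1,-1),(-1,1,1)$, for which $(\alpha,\beta,\gamma,\delta)=(-1,1,1,-2)$. It then solves system~\eqref{sistema} with an auxiliary tuple $(A,B,C,D,E)=(0,B,2/B^2,-B/2,-B/2)$ where $B^3=4$, and finally rescales by $B^2/2$.

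You instead substitute $(E,A,B,C,D)=(-1,0,2,1,-1)$ directly into Proposition~\ref{Prop:Condition}. Your four coefficients $(-8,8,8,-16)$ are correct. So is the identification $\Delta_{F_{\e}}=4\det(B_{\e})=8(\e^{q^2}-\e^q+\e)(\e^{q^2}+\e^q-\e)(-\e^{q^2}+\e^q+\e)$; I checked this independently by computing $F_\e$ from $f$ and expanding its Dickson determinant.

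The underlying fact is the same in both proofs: the Dickson determinant is a nonzero constant times the product of the three Frobenius conjugates of $\e^{q^2}-\e^q+\e$, each nonvanishing by Lemma~\ref{Lemma_L}. Your route, however, is a finite numerical check with no rescaling, and it works for every odd $q$. The paper's argument needs some $B\in\F_q^*$ with $B^3=4$, which does not always exist. For example, when $q=7$ the nonzero cubes in $\F_7$ are only $\pm1$. So your argument also covers cases the published proof does not. What the paper's route buys is placing this quadrinomial inside the general framework of Theorem~\ref{prod-polis}.

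One small correction to your ``Equivalently'' remark. The unscaled cyclic shifts of $X^{q^2}-X^q+X$ give $(\alpha,\beta,\gamma,\delta)=(-1,1,1,-2)$, exactly as in the paper, not $(-2,2,2,-4)$. With those values, $(-1,0,2,1,-1)$ does not satisfy~\eqref{sistema}: the first equation would read $-4=-2$. To obtain $(-2,2,2,-4)$ you must replace one $P_i$ by $2P_i$. This is harmless, since conditions (1)--(3) are multilinear and $2P_i$ still has no nonzero roots. The remark plays no role in your main argument, so the proof stands.
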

	\begin{proof}
		We apply Theorem \ref{prod-polis}. Consider the tuples
		\[
		(a_1,b_1,c_1) = (1,-1,1), \quad (a_2,b_2,c_2) = (1,1,-1), \quad (a_3,b_3,c_3) = (-1,1,1).
		\]
		For each $i$, we have
		\[
		a_i^3 + b_i^3 + c_i^3 - 3a_ib_ic_i = 4 \neq 0,
		\]
		hence, by Lemma \ref{Lemma_L}, the corresponding polynomials $a_iX^{q^2} + b_iX^q + c_iX$ for $i=1,2,3 $ have no roots in $\F_{q^3}^*$. A direct computation gives
		\[
		\alpha = -1, \quad \beta = 1, \quad \gamma = 1, \quad \mbox{and} \quad \delta = -2.
		\]
		in Theorem \ref{prod-polis}. Choosing $B \in \F_q^*$ satisfying $B^3 = 4$, and the 5-tuple given by
		\[
		(A,B,C,D,E) = \left(0,\; B,\; \frac{2}{B^2},\; -\frac{B}{2},\; -\frac{B}{2}\right).
		\]
		it can be verified that it satisfies the system \eqref{sistema}. In fact, since $B^3=4$, 
		substituting into the system \eqref{sistema}, we obtain
		
		$$	\begin{cases} 
			&-B^2C = -2 = 2\alpha,\\
			&B^2E - 2BCE + 2BD^2 = 2 = 2\beta,\\
			&B^2D - 2BCD + 2BE^2 = 2 = 2\gamma,\\
			&B^3 + 4C^3 - 12CDE + 4D^3 + 4E^3 = -4 = 2\delta.
		\end{cases} 
		$$
		
		Therefore, by Theorem \ref{prod-polis}, the polynomial
		\[
		f_{-\frac{B}{2},\; 0,\; B,\; \frac{2}{B^2},\; -\frac{B}{2} }(X) = - \frac{B}{2}X^2 + B X^{q^2+1} + \frac{2}{B^2}X^{2q} - \frac{B}{2}X^{2q^2}
		\]
		is planar over $\F_{q^3}$. So, also $f(X)=B^2/2f_{-\frac{B}{2},\; 0,\; B,\; \frac{2}{B^2},\; -\frac{B}{2}}(X)$ is planar.
	\end{proof}

	We now present a family of planar pentanomials with two parameters.
	
	\begin{theorem}\label{2parametros}
	
	Let $q$ be an odd prime power, and let $D, E$ in  $\mathbb{F}_q$. Define $\omega = 3D^2 - 3DE + E^2$ and consider the polynomial
	$$f(X) =  EX^2 + 2(E-D)X^{q+1} + 2DX^{q^2+1} + (E-D)X^{2q} + DX^{2q^2}.
	$$
	Then, $f(X)$ is planar over $\mathbb{F}_{q^3}$ if and only if $E\omega \neq 0$.
	\end{theorem}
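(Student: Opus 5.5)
The plan is to avoid expanding the determinant in Proposition \ref{Prop:Condition}. Instead, I would rewrite $f$ so that $F_\epsilon$ becomes a composition of simpler $q$-polynomials. Put $T_1(X)=X+X^q$ and $T_2(X)=X+X^{q^2}$. Expanding the squares shows that, as polynomials,
\[
f(X)=(E-D)\,T_1(X)^2+D\,T_2(X)^2 .
\]
Indeed, the $X^2$-coefficient on the right is $(E-D)+D=E$, and the other coefficients match those in the statement. Since each $T_i$ is additive, $T_i(X+\epsilon)^2-T_i(X)^2-T_i(\epsilon)^2=2T_i(\epsilon)T_i(X)$. Hence
\[
F_\epsilon(X)=u\,T_1(X)+v\,T_2(X),\qquad u:=2(E-D)T_1(\epsilon),\quad v:=2D\,T_2(\epsilon).
\]

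Next I use that on $\mathbb{F}_{q^3}$ we have $T_1(x)^{q^2}=x^{q^2}+x^{q^3}=x^{q^2}+x=T_2(x)$. So, as maps on $\mathbb{F}_{q^3}$, $F_\epsilon=M_\epsilon\circ T_1$ with $M_\epsilon(Y)=uY+vY^{q^2}$. By Lemma \ref{Lemma_L} with $(a,b,c)=(0,1,1)$, $T_1$ has no nonzero roots, since $0+1+1-0=2\neq 0$ for $q$ odd. Thus $T_1$ is a bijection, and in particular $T_1(\epsilon)$ and $T_2(\epsilon)=T_1(\epsilon)^{q^2}$ are nonzero for $\epsilon\neq0$. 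Therefore $f$ is planar if and only if $M_\epsilon$ has no nonzero root for every $\epsilon\in\mathbb{F}_{q^3}^*$.

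Now I analyse $M_\epsilon$ through the norm $N=N_{\mathbb{F}_{q^3}/\mathbb{F}_q}$. If $v=0$, which happens exactly when $D=0$, then $M_\epsilon$ is injective iff $u\neq0$, i.e.\ iff $E\neq0$. If $v\neq0$, a nonzero root $y$ satisfies $y^{q^2-1}=-u/v$. Since $\gcd(q^2-1,q^3-1)=q-1$, the image of $y\mapsto y^{q^2-1}$ on $\mathbb{F}_{q^3}^*$ is the subgroup of $(q-1)$-th powers, which is exactly the kernel of the norm. So a nonzero root exists iff $N(-u/v)=1$. Because the degree is odd, $N(-1)=-1$, so this condition is $N(u)+N(v)=0$. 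Finally,
\[
N(u)=8(E-D)^3N(T_1(\epsilon)),\qquad N(v)=8D^3N(T_1(\epsilon)^{q^2})=8D^3N(T_1(\epsilon)),
\]
and $N(T_1(\epsilon))\neq0$. Hence a nonzero root exists iff $(E-D)^3+D^3=0$. A direct expansion gives $(E-D)^3+D^3=E(E^2-3ED+3D^2)=E\omega$.

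Putting the cases together: if $E\omega\neq0$, then every $M_\epsilon$ is injective and $f$ is planar. Conversely, suppose $E\omega=0$. If $D\neq0$, then for every $\epsilon\neq0$ the map $M_\epsilon$ has a nonzero root, so $f$ is not planar. If $D=0$, then $E\omega=E^3=0$ forces $E=0$, so $f\equiv0$, which is not planar. The main point to get right is the norm criterion for the binomial $uY+vY^{q^2}$, namely the surjectivity of $y\mapsto y^{q^2-1}$ onto the norm-one subgroup. The sign $N(-1)=-1$ is exactly what turns the condition into $(E-D)^3+D^3$ rather than a difference. As a cross-check, one could instead expand $\Delta_{F_\epsilon}$ from Proposition \ref{Prop:Condition} and verify that it equals $4\bigl((E-D)^3+D^3\bigr)$ times the product $(\epsilon+\epsilon^q)(\epsilon+\epsilon^{q^2})(\epsilon^q+\epsilon^{q^2})$ from Proposition \ref{lemma Ag}.
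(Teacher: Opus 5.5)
Your proof is correct and complete, but it takes a different route from the paper. The paper feeds the three root-free $q$-polynomials $4(X^{q^2}+X^q)$, $E(X^{q^2}+X)$, $\omega(X^q+X)$ into Theorem \ref{prod-polis}. It computes $\alpha=0$, $\beta=\gamma=4E\omega$, $\delta=8E\omega$, and asserts, by a ``straightforward analysis'', that $(E,2(E-D),2D,E-D,D)$ solves the system \eqref{sistema}. Hence the Dickson determinant of Proposition \ref{Prop:Condition} factors into those three $q$-polynomials evaluated at $\epsilon$.

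You instead spot the identity $f=(E-D)T_1^2+DT_2^2$, which turns $F_\epsilon$ into $M_\epsilon\circ T_1$ with $T_1$ bijective, and then handle the binomial $uY+vY^{q^2}$ with the norm criterion. This avoids expanding the $3\times 3$ determinant and checking four polynomial identities. It also explains where $E\omega=(E-D)^3+D^3$ comes from: it is $N(u)+N(v)$ divided by $8N(T_1(\epsilon))$.

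Your argument also gives the ``only if'' half, which the paper's proof does not address. The paper's auxiliary polynomials are root-free only when $E\omega\neq 0$, so as written it proves sufficiency only. Necessity would need the extra remark that the system identities hold identically in $D,E$, so that $\Delta_{F_\epsilon}$ vanishes identically when $E\omega=0$. What the paper's approach buys in exchange is a single mechanism, Theorem \ref{prod-polis}, that produces all of its families.

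One harmless slip in your closing cross-check. The Dickson determinant of $T_1$ is $2$ and that of $M_\epsilon$ is $N(u)+N(v)$, so the correct factorization is $\Delta_{F_\epsilon}=16E\omega\,(\epsilon+\epsilon^q)(\epsilon+\epsilon^{q^2})(\epsilon^q+\epsilon^{q^2})$, i.e.\ $4E\omega\det(A_\epsilon)$, not $4E\omega$ times the bare product. This does not affect your argument.
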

	\begin{proof}	
	From Lemma \ref{Lemma_L}, the polynomials
	$$4X^{q^2} + 4X^q, \quad EX^{q^2} + EX, \quad \omega X^q + \omega X$$
	have no roots in $\mathbb{F}_{q^3}^*$. These polynomials satisfy the hypothesis of Theorem \ref{prod-polis}, and we obtain
	
		$$\alpha = 0, \quad \beta = \gamma = 4E\omega, \quad \delta = 8E\omega.$$
	A straightforward analysis shows that the coefficients of $f_{E,2(E-D), 2D, E-D, D}(X)$ satisfy the corresponding system of equations, and so $f_{E,2(E-D), 2D, E-D, D}(X)$ is planar over $\mathbb{F}_{q^3}$.

	\end{proof}	
	
		In the last theorem we provide two new pentanomials planar functions.  
	
		\begin{theorem} \label{teo: two pent}
		Let $q$ be a power of an odd prime.  Then the polynomials
		\begin{enumerate}
			\item $f_{1,-1,-1,1,1}(X) = X^2 - X^{q+1} - X^{q^2+1} + X^{2q} + X^{2q^2}$ and 
			\item $f_{1,1,1,1/2,1/2}(X) = X^2 + X^{q+1} + X^{q^2 + 1} + \frac{1}{2}X^{2q} + \frac{1}{2} X^{2q^2}$
		\end{enumerate}
			are planar over $\mathbb{F}_{q^3}$.
	\end{theorem}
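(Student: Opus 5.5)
The plan is to avoid the system \eqref{sistema} altogether and go straight back to Proposition \ref{Prop:Condition} in its determinant form. For each of the two polynomials I would identify the Dickson matrix of $F_{\epsilon}$ with one of the matrices already treated in Propositions \ref{lemma Ag} and \ref{lemma Bh}, up to transposition. Recall that the proof of Proposition \ref{Prop:Condition} gives
\[
F_{\epsilon}(X)=(2E\epsilon + A\epsilon^q + B\epsilon^{q^2})X+(A\epsilon+2C\epsilon^q)X^q+(B\epsilon+2D\epsilon^{q^2})X^{q^2}.
\]
Planarity is therefore equivalent to $\Delta_{F_\epsilon}\neq 0$ for every $\epsilon\in\mathbb{F}_{q^3}^*$. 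Here $\Delta_{F_\epsilon}$ is the determinant of the Dickson matrix with rows $(c_0,c_1,c_2)$, $(c_2^q,c_0^q,c_1^q)$ and $(c_1^{q^2},c_2^{q^2},c_0^{q^2})$.

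For item (2), take $(E,A,B,C,D)=(1,1,1,1/2,1/2)$. The coefficients become
\[
c_0=2\epsilon+\epsilon^q+\epsilon^{q^2},\qquad c_1=\epsilon+\epsilon^q,\qquad c_2=\epsilon+\epsilon^{q^2}.
\]
The first column of the matrix $A_\epsilon$ in Proposition \ref{lemma Ag} is exactly $(c_0,c_1,c_2)^T$. Its second and third columns are the $q$- and $q^2$-Frobenius images of the cyclically shifted columns, namely $(c_2^q,c_0^q,c_1^q)^T$ and $(c_1^{q^2},c_2^{q^2},c_0^{q^2})^T$. So $A_\epsilon$ is the transpose of the Dickson matrix of $F_\epsilon$, which gives $\Delta_{F_\epsilon}=\det(A_\epsilon)$. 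Proposition \ref{lemma Ag} shows $\det(A_\epsilon)=4(\epsilon^{q^2}+\epsilon^q)(\epsilon^{q^2}+\epsilon)(\epsilon^q+\epsilon)\neq0$ for every nonzero $\epsilon$, so $f_{1,1,1,1/2,1/2}$ is planar.

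For item (1), take $(E,A,B,C,D)=(1,-1,-1,1,1)$. The coefficients become
\[
c_0=2\epsilon-\epsilon^q-\epsilon^{q^2},\qquad c_1=-\epsilon+2\epsilon^q,\qquad c_2=-\epsilon+2\epsilon^{q^2}.
\]
These are exactly the entries of the first column of $B_\epsilon$ in Proposition \ref{lemma Bh}. The same check as before shows that the remaining columns of $B_\epsilon$ are the Frobenius-shifted columns, so $B_\epsilon$ is the transpose of the Dickson matrix of $F_\epsilon$. Hence $\Delta_{F_\epsilon}=\det(B_\epsilon)\neq 0$ by Proposition \ref{lemma Bh}, and $f_{1,-1,-1,1,1}$ is planar.

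The only real work is the bookkeeping in matching the Dickson matrix entries to those of $A_\epsilon$ and $B_\epsilon$. Concretely, this means checking that $(\epsilon+\epsilon^{q^2})^q$ equals $c_2^q$ in the right slot, and similarly for the other entries, without mixing up rows and columns. Once that identification is confirmed, the theorem follows immediately from Propositions \ref{lemma Ag} and \ref{lemma Bh} together with Lemma \ref{LemmaLinearized}.

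As an independent check on item (1), one could instead invoke Corollary \ref{cor1poli} with $(a,b,c)=(1,-1,1)$, since Proposition \ref{lemma Bh} factors the determinant through the cyclic shifts of $X^{q^2}-X^q+X$. This would amount to verifying the system \eqref{sistema} directly, and I expect the matrix identification above to be shorter.
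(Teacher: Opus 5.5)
Your proposal is correct and follows the paper's proof. The paper likewise identifies the Dickson matrix of $F_\epsilon$ for $(E,A,B,C,D)=(1,-1,-1,1,1)$ and $(1,1,1,1/2,1/2)$ with $B_\epsilon$ and $A_\epsilon$ and invokes Propositions \ref{lemma Bh} and \ref{lemma Ag}; your entry-by-entry check just makes explicit the transposition the paper leaves implicit.

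One caution about your closing aside: Corollary \ref{cor1poli} with $(a,b,c)=(1,-1,1)$ does not confirm item (1). For $(E,A,B,C,D)=(1,-1,-1,1,1)$ the first equation gives $-A^2D+ABE-B^2C=-1\neq -2=2\alpha$. The normalization is off by a factor of $2$, since $\det(B_\epsilon)=2P_1(\epsilon)P_2(\epsilon)P_3(\epsilon)$. You would have to rescale one factor, e.g.\ apply Theorem \ref{prod-polis} to the triples $\tfrac12(1,-1,1)$, $(1,1,-1)$, $(-1,1,1)$.
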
	
	\begin{proof}
		We apply Proposition \ref{Prop:Condition}. For the function $f_{1,-1,-1,1,1}(X)$ and
		 $\e \in \mathbb{F}_{q^3}^{\ast}$ we have 
		$
		\Delta_{F_\e}= B_\e \neq 0$ following the notation and results of Proposition  \ref{lemma Bh}.	Similarly, for the polynomial $f_{1,1,1,1/2,1/2}(X)$ and $\epsilon \in \mathbb{F}_{q^3}^{\ast}$, we have $\Delta_{F_\epsilon} = \det(A_\epsilon) \neq 0$, following the notation and results of Proposition \ref{lemma Ag}.
	\end{proof}	
	
	\section*{Acknowledgements}
	The first author was partially supported by FAPESP (Brazil), grant no. 2024/19443-4. The third author was partially supported by Conselho
Nacional de Desenvolvimento Científico e Tecnológico  CNPq/Brasil - 307261/2023-9, Fundação de Amparo a Pesquisa do Estado do Rio de Janeiro FAPERJ/Brasil (CNE E-26/204.037/2024), and Coordenação de Aperfeiçoamento de Pessoal de Nível Superior CAPES/Brasil, 001. The fourth author was partially supported by CNPq Grant 306270/2023-4 and FAPEMIG Grant APQ-01443-23.

	\end{document}